\DeclareMathAlphabet{\mathpzc}{OT1}{pzc}{m}{it}
\DeclareMathOperator{\diag}{diag}
\DeclareMathOperator{\Proj}{Proj}
\newcommand{\calL}{{\mathcal L}}
\newcommand{\R}{\mathbb{R}}
\newcommand{\usf}{\mathsf{u}}
\newcommand{\asf}{\mathsf{a}}
\newcommand{\bsf}{\mathsf{b}}
\newcommand{\usfd}{\mathsf{u}_{\textup{d}}}
\newcommand{\zsf}{\mathsf{z}}
\newcommand{\calLs}{\mathcal{L}^s}
\newcommand{\diff}{\, \mbox{\rm d}}
\newcommand{\ie}{i.e.,\@\xspace}
\newcommand{\Hs}{\mathbb{H}^s(\Omega)}
\newcommand{\Ws}{\mathbb{H}^{1-s}(\Omega)}
\newcommand{\GL}{{\textup{\textsf{GL}}}}
\newcommand{\Nin}{\,{\mbox{\,\raisebox{6.0pt} {\tiny$\circ$} \kern-10.9pt}\N }}
\newcommand{\DIV}{\textrm{div}}
\newcommand{\C}{\mathcal{C}}
\newcommand{\ue}{\mathscr{U}}
\newcommand{\V}{\mathbb{V}}
\newcommand{\U}{\mathbb{U}}
\newcommand{\I}{\mathcal{I}}
\newcommand{\T}{\mathscr{T}}
\newcommand{\Tr}{\mathbb{T}}
\newcommand{\rsf}{\mathsf{r}}
\newcommand{\ope}{\bar{\mathscr{P}}}
\newcommand{\oue}{\bar{\mathscr{U}}}
\newcommand{\orsf}{\bar{\mathsf{r}}}
\newcommand{\opsf}{\bar{\mathsf{p}}}
\newcommand{\ousf}{\bar{\mathsf{u}}}
\newcommand{\ozsf}{\bar{\mathsf{z}}}
\newcommand{\pe}{\mathscr{P}}
\newcommand{\wsf}{\mathsf{w}}
\newcommand{\psf}{\mathsf{p}}
\newcommand{\Zad}{\mathsf{Z}_{\textrm{ad}}}
\newcommand{\HLn}{{\mbox{\,\raisebox{5.1pt} {\tiny$\circ$} \kern-9.3pt}{H}^1_L  }}
\DeclareMathOperator*{\tr}{tr_\Omega}
\newcommand{\HL}{ \mbox{ \raisebox{7.0pt} {\tiny$\circ$} \kern-10.7pt} {H_L^1} }
\newcommand{\Hsd}{\mathbb{H}^{-s}(\Omega)}
\newcommand{\Y}{\mathpzc{Y}}
\newcommand{\TheTitle}{Sparse optimal control for fractional diffusion}
\newcommand{\ShortTitle}{Sparse control}
\newcommand{\TheAuthors}{E.~Ot\'arola, A.J.~Salgado}
\headers{\ShortTitle}{\TheAuthors}
\title{{\TheTitle}\thanks{EO has been supported in part by CONICYT through FONDECYT project 3160201. AJS has been supported in part by NSF grant DMS-1418784.}}
\author{
  Enrique Ot\'arola\thanks{Departamento de Matem\'atica, Universidad T\'ecnica Federico Santa Mar\'ia, Valpara\'iso, Chile.
    (\email{enrique.otarola@usm.cl}, \url{http://eotarola.mat.utfsm.cl/}).}
  \and
  Abner J.~Salgado\thanks{Department of Mathematics, University of Tennessee, Knoxville, TN 37996, USA.
    (\email{asalgad1@utk.edu}, \url{http://www.math.utk.edu/\string~abnersg})}
}
\date{Submitted \today.}
\begin{document}

\maketitle

\begin{abstract}
We consider an optimal control problem that entails the minimization of a nondifferentiable cost functional, fractional diffusion as state equation and constraints on the control variable.  We provide existence, uniqueness and regularity results together with first order optimality conditions. In order to propose a solution technique, we realize fractional diffusion as the Dirichlet-to-Neumann map for a nonuniformly elliptic operator and consider an equivalent optimal control problem with a nonuniformly elliptic equation as state equation. The rapid decay of the solution to this problem suggests a truncation that is suitable for numerical approximation. We propose a fully discrete scheme: piecewise constant functions for the control variable and first--degree tensor product finite elements for the state variable. We derive a priori error estimates for the control and state variables which are quasi--optimal with respect to degrees of freedom.
\end{abstract}

\begin{keywords}
optimal control problem, nondifferentiable objective, sparse controls, fractional diffusion, weighted Sobolev spaces, finite elements, stability, anisotropic estimates.
\end{keywords}

\begin{AMS}
26A33,    
35J70,    
49K20,    
49M25,    
65M12,    
65M15,    
65M60.    
\end{AMS}

\section{Introduction}
\label{sec:intro}
In this work we shall be interested in the design and analysis of a numerical technique to approximate the solution to a nondifferentiable optimal control problem involving the fractional powers of a uniformly elliptic second order operator; control constraints are also considered. To make matters precise, let $\Omega$ be an open and bounded polytopal domain of $\R^n$ with $n \geq 1$. Given $s \in (0,1)$ and a desired state $\usfd: \Omega \rightarrow \R$, we define the nondifferentiable cost functional
\begin{equation}
\label{eq:J}
J(\usf,\zsf) = \frac{1}{2} \|\usf-\usfd\|_{L^2(\Omega)}^2 + \frac{\sigma}{2} \|\zsf \|_{L^2(\Omega)}^2 + \nu \|\zsf\|_{L^1(\Omega)},
\end{equation}
where $\sigma$ and $\nu$ are positive parameters. We shall thus be concerned with the following nondifferentiable optimal control problem: Find
\begin{equation}
\label{eq:min}
\min J(\usf,\zsf)
\end{equation}
subject to the \emph{fractional state equation}
\begin{equation}
\label{eq:state_equation}
\calLs \usf = \zsf \textrm{ in } \Omega,
\end{equation}
and the \emph{control constraints}
\begin{equation}
\label{eq:cc}
\asf \leq \zsf(x') \leq \bsf \quad \textrm{a.e.} \quad x' \in \Omega.
\end{equation}
The operator $\calLs$, with $s \in (0,1)$, is a fractional power of the second order, symmetric, and uniformly elliptic operator
\begin{equation}
\label{eq:L}
 \mathcal{L} w= - \DIV_{x'} (A(x') \nabla_{x'} w) + c(x') w,
\end{equation}
supplemented with homogeneous Dirichlet boundary conditions; $0 \leq c \in L^{\infty}(\Omega)$ and $A \in C^{0,1}(\Omega,\GL(n,\R))$ is symmetric and positive definite. The control bounds $\asf, \bsf \in \R$ and, since we are interested in the nondifferentiable scenario, we assume that $\asf < 0 < \bsf$ \cite[Remark 2.1]{CHW:12}.

The design of numerical techniques for the optimal control problem \eqref{eq:min}--\eqref{eq:cc} is mainly motivated by the following considerations:
\begin{enumerate}[$\bullet$]
 \item Fractional diffusion has recently become of great interest in the applied sciences and engineering: practitioners  claim that it seems to better describe many processes. For instance, mechanics \cite{atanackovic2014fractional}, biophysics \cite{bio}, turbulence \cite{wow}, image processing \cite{GH:14}, nonlocal electrostatics \cite{ICH} and finance \cite{MR2064019}. It is then natural the interest in efficient approximation schemes for problems that arise in these areas and their control.
 
 \item The objective functional $J$ contains an $L^1(\Omega)$--control cost term that leads to sparsely supported optimal controls; a desirable feature, for instance, in the optimal placement of discrete actuators \cite{MR2556849}. This term is also relevant in settings where the control cost is a linear function of its magnitude \cite{MR2283487}.
\end{enumerate}

One of the main difficulties in the study and discretization of the state equation \eqref{eq:state_equation} is the nonlocality of the fractional operator $\calLs$ \cite{CS:07,CDDS:11,ST:10}. A possible approach to this issue is given by a result of Caffarelli and Silvestre in $\mathbb{R}^n$ \cite{CS:07} and its extensions to bounded domains \cite{CDDS:11,ST:10}: Fractional powers of $\mathcal{L}$ can be realized as an operator that maps a Dirichlet boundary condition to a Neumann condition via an extension problem on the semi--infinite cylinder $\C = \Omega \times (0,\infty)$. Therefore, we shall use the Caffarelli--Silvestre extension to rewrite the fractional state equation \eqref{eq:state_equation}
as follows:
\begin{equation}
\label{eq:harmonic_extension}
  -\DIV \left( y^{\alpha} \mathbf{A} \nabla \ue \right) + y^{\alpha} c\ue = 0  \textrm{ in } \C, \quad
  \ue = 0 \text{ on } \partial_L \C, \quad
  \frac{ \partial \ue }{\partial \nu^\alpha} = d_s \zsf  \text{ on } \Omega \times \{0\},
\end{equation}
where $\partial_L \C= \partial \Omega \times [0,\infty)$ is the lateral boundary of $\C$, $\alpha = 1-2s \in (-1,1)$, $d_s = 2^{\alpha}\Gamma(1-s)/\Gamma(s)$ and 
the conormal exterior derivative of $\ue$ at $\Omega \times \{ 0 \}$ is
\begin{equation}
\label{def:lf}
\frac{\partial \ue}{\partial \nu^\alpha} = -\lim_{y \rightarrow 0^+} y^\alpha \ue_y;
\end{equation}
the limit being understood in the distributional sense \cite{CS:07,CDDS:11,ST:10}. Finally, the matrix $\mathbf{A} \in C^{0,1}(\C,\GL(n+1,\R))$ is defined by $\mathbf{A}(x',y) =  \diag \{A(x'),1\} $. We will call $y$ the \emph{extended variable} and the dimension $n+1$ in $\R_+^{n+1}$ the \emph{extended dimension} of problem \eqref{eq:harmonic_extension}. As noted in \cite{CS:07,CDDS:11,ST:10}, $\mathcal{L}^s$ and the Dirichlet-to-Neumann operator of \eqref{eq:harmonic_extension} are related by
\[
 d_s \mathcal{L}^s \usf = \partial_{\nu}^{\alpha} \ue \quad \text{in } \Omega \times \{ 0\}.
\]

The analysis of optimal control problems involving a functional that contains an $L^1(\Omega)$--control cost term
has been previously considered in a number of works. The article \cite{MR2556849} appears to be the first to provide an analysis when the state equation is a linear elliptic PDE: the author utilizes a regularization technique that involves an $L^2(\Omega)$--control cost term, analyze optimality conditions, and study the convergence properties of a proposed semismooth Newton method. These results were later extended in \cite{MR2826983}, where the authors obtain rates of convergence with respect to a regularization parameter. Subsequently, in \cite{CHW:12}, the authors consider a semilinear elliptic PDE as state equation and analyze second order optimality conditions. Simultaneously, the numerical analysis based on finite element techniques has also been developed in the literature. We refer the reader to \cite{MR2826983}, where the state equation is a linear elliptic PDE and to \cite{CHW:12again,CHW:12} for extensions to the semilinear case. The common feature in these references, is that, in contrast to \eqref{eq:state_equation}, the state equation is local. To the best of our knowledge, this is the first work addressing the analisys and numerical approximation of \eqref{eq:min}--\eqref{eq:cc}.

The main contribution of this work is the design and analysis of a solution technique for the \emph{fractional optimal control problem} \eqref{eq:min}--\eqref{eq:cc}. We overcome the nonlocality of $\calLs$ by using the Caffarelli--Silvestre extension: we realize the state equation \eqref{eq:state_equation} by \eqref{eq:harmonic_extension}, so that our problem can be equivalently written as: Minimize $J(\ue|_{y=0},\zsf)$ subject to the extended state equation \eqref{eq:harmonic_extension} and the control constraints \eqref{eq:cc}; \emph{the extended optimal control problem}. We thus follow \cite{MR3429730,MR3504977} and propose the following strategy to solve our original control problem \eqref{eq:min}--\eqref{eq:cc}: given a desired state $\usfd$, employ the finite element techniques of \cite{NOS} and solve the equivalent optimal control problem. This yields an optimal control $\zsf: \Omega \rightarrow \R$ and an optimal extended state $\ue: \C \rightarrow \R$. Setting $\usf(x') = \ue(x',0)$ for all $x' \in \Omega$, we obtain the optimal pair $(\usf,\zsf)$ that solves \eqref{eq:min}--\eqref{eq:cc}.

The outline of this paper is as follows. In section~\ref{sec:not_and_pre} we introduce notation, define fractional powers of elliptic operators via spectral theory, introduce the functional framework that is suitable to analyze problems \eqref{eq:state_equation} and \eqref{eq:harmonic_extension} and recall elements from convex analysis. In section~\ref{sec:fractional}, we study the fractional optimal control problem. We derive existence and uniqueness results together with first order necessary and sufficient optimality conditions. In addition, we study the regularity properties of the optimal variables. In section \ref{sec:extended} we analyze the extended optimal control problem. We begin with the numerical analysis for our optimal control problem  in section \ref{sec:truncated}, where we introduce a truncated problem and derive approximation properties of its solution. Section \ref{sec:approximation} is devoted to the design and analysis of a numerical scheme to approximate the solution to the control problem \eqref{eq:min}--\eqref{eq:cc}: we derive a priori error estimates for the optimal control variable and the state. 

\section{Notation and Preliminaries}
\label{sec:not_and_pre}

In this work $\Omega$ is a bounded and open convex polytopal subset of $\R^n$ ($n\geq1$) with boundary $\partial\Omega$. The difficulties inherent to curved boundaries could be handled with the arguments developed in \cite{Otarola_controlp1} but this would only introduce unnecessary complications of a technical nature.

We follow the notation of \cite{MR3429730,NOS} and define the semi--infinite cylinder with base $\Omega$ and its lateral boundary, respectively, by $\C = \Omega \times (0,\infty)$ and $\partial_L \C  = \partial \Omega \times [0,\infty)$. For $\Y>0$, we define the truncated cylinder
$
  \C_\Y = \Omega \times (0,\Y)
$
and 
$\partial_L\C_\Y$ accordingly. 

Throughout this manuscript we will be dealing with objects defined on $\R^n$ and $\R^{n+1}$. It will thus be important to distinguish the extended $(n+1)$--dimension, which will play a special role in the analysis. We denote a vector $x\in \R^{n+1}$ by $x = (x',y)$ with $x' \in \R^n$ and $y\in\R$.

In what follows the relation $A \lesssim B$ means that $A \leq c B$ for a nonessential constant whose value might change at each occurrence.

\subsection{Fractional powers of second order elliptic operators}
\label{sub:fractional_powers}
We proceed to briefly review the spectral definition of the fractional powers of the second order elliptic operator $\calL$, defined in \eqref{eq:L}. To accomplish this task we invoke the spectral theory for $\mathcal{L}$, which yields the existence of a countable collection of eigenpairs $\{(\lambda_k,\varphi_k)\}_{k \in \mathbb N} \subset \R_+ \times H^1_0(\Omega)$ such that
\begin{equation*}
  \label{eigenvalue_problem_L}
    \mathcal{L} \varphi_k = \lambda_k \varphi_k  \text{ in } \Omega,
    \qquad
    \varphi_k = 0 \text{ on } \partial\Omega, \qquad k \in \mathbb N.
\end{equation*}
In addition, $\{\varphi_k\}_{k \in \mathbb N}$ is an orthonormal basis of $L^2(\Omega)$ and an orthogonal basis of $H_0^1(\Omega)$. Fractional powers of  $\mathcal L$, are thus defined by
\begin{equation}
  \label{def:second_frac}
  \mathcal{L}^s w  := \sum_{k=1}^\infty \lambda_k^{s} w_k \varphi_k
  \quad \forall w \in C_0^{\infty}(\Omega), \qquad s \in (0,1),
  \quad
  w_k = \int_{\Omega} w \varphi_k \diff x'.
\end{equation}
Invoking a density argument, the previous definition can be extended to
\begin{equation}
\label{Hs}
  \Hs = \left\{ w = \sum_{k=1}^\infty w_k \varphi_k \in L^2(\Omega): \| w \|^2_{\Hs}:= \sum_{k=1}^\infty \lambda_k^s |w_k|^2 < \infty \right\}.
\end{equation}
This space corresponds to $[L^2(\Omega),H_0^1(\Omega)]_{s}$ \cite[Chapter 1]{Lions}. Consequently, if $s \in (\tfrac12,1)$, $\Hs$ can be characterized by
\[
  \Hs = \left\{ w \in H^s(\Omega): w = 0 \text{ on } \partial\Omega \right\},
\]
and, if $s \in  (0,\tfrac12)$, then $\Hs = H^s(\Omega) = H_0^s(\Omega)$. If $s=\frac{1}{2}$, the space $\mathbb{H}^{\frac{1}{2}}(\Omega)$ corresponds to the so-called \emph{Lions--Magenes} space \cite[Lecture 33]{Tartar}. When deriving regularity results for the optimal variables of problem \eqref{eq:min}--\eqref{eq:cc}, it will be important to characterize the space $\Hs$ for $s\in(1,2]$. In fact, we have that, for such a range of values of $s$, $\Hs = H^s(\Omega)\cap H^1_0(\Omega)$; see \cite{ShinChan}. 

For $s\in(0,1)$ we denote by $\Hsd$ the dual of $\Hs$. With this notation, $\mathcal{L}^s : \Hs \to \Hsd$ is an isomorphism.

\subsection{Weighted Sobolev spaces}
The localization results by Caffarelli and Silvestre \cite{CS:07,CDDS:11,ST:10} require to deal with a nonuniformly elliptic equation posed on the semi--infinite cylinder $\C$. To analyze such an equation, it is instrumental to consider weighted Sobolev spaces with the weight $y^{\alpha}$ ($-1 < \alpha <1$ and $y \geq0$). We thus define
\begin{equation}
  \label{HL10}
  \HL(y^{\alpha},\C) = \left\{ w \in H^1(y^\alpha,\C): w = 0 \textrm{ on } \partial_L \C\right\}.
\end{equation}
For $\alpha \in (-1,1)$ we have that the weight $|y|^\alpha$ belongs to the so--called Muckenhoupt class $A_2(\R^{n+1})$, see \cite{Muckenhoupt,Turesson}. Consequently, $\HL(y^{\alpha},\C)$, endowed with the norm
\begin{equation}
 \label{eq:norm}
 \| w \|_{H^1(y^{\alpha},\C)} := \left( \| w \|_{L^2(y^{\alpha},\C)} + \| \nabla w \|_{L^2(y^{\alpha},\C)}\right)^{\frac{1}{2}}
\end{equation}
is a Hilbert space \cite[Proposition 2.1.2]{Turesson} and smooth functions are dense \cite[Corollary 2.1.6]{Turesson}; see also \cite[Theorem~1]{GU}. We recall the following \emph{weighted Poincar\'e inequality}: 
\begin{equation}
\label{Poincare_ineq}
\| w \|_{L^2(y^{\alpha},\C)} \lesssim \| \nabla w \|_{L^2(y^{\alpha},\C)}
\quad \forall w \in \HL(y^{\alpha},\C)
\end{equation}
\cite[ineq. (2.21)]{NOS}. We thus have that $\| \nabla w \|_{L^2(y^{\alpha},\C)}$ is equivalent to \eqref{eq:norm} in $\HL(y^{\alpha},\C)$. For $w \in H^1( y^{\alpha},\C)$, we denote by $\tr w$ its trace onto $\Omega \times \{ 0 \}$, and we recall (\cite[Prop.~2.5]{NOS})
\begin{equation}
\label{Trace_estimate}
\tr \HL(y^\alpha,\C) = \Hs,
\qquad
  \|\tr w\|_{\Hs} \lesssim  \| w \|_{\HLn(y^\alpha,\C)}.
\end{equation}

\subsection{Convex functions and subdifferentials}
Let $E$ be a real normed vector space. Let $\eta: E \rightarrow \R \cup \{\infty\}$ be convex and proper, and let $v \in E$ with $\eta(v) < \infty$. By convexity of $\eta$ and the fact that $\eta(v) < \infty$ we conclude that the graph of $\eta$ can always be minorized by a hyperplane. If $\eta$ is not differentiable at $v$, then a useful substitute for the derivative is a subgradient, which is nothing but the slope of a hyperplane that minorizes the graph of $\eta$ and is exact at $v$. In other words, a \emph{subgradient} of $\eta$ at $v$ is a continuous linear functional $v^*$ on $E$ that satisfies
\begin{equation}
 \label{eq:subgradient}
 \langle v^*,w - v \rangle \leq \eta(w) - \eta(v) \quad \forall w \in E,
\end{equation}
where $\langle \cdot, \cdot \rangle$  denotes the duality pairing between $E^{*}$ and $E$. We immediately remark that a function may admit many subgradients at a point of nondifferentiability. The set of all subgradients of $\eta$ at $v$ is called \emph{subdifferential} of $\eta$ at $v$ and is denoted by $\partial \eta (v)$. Moreover, by convexity, the subdifferential $\partial \eta(v) \neq \emptyset$ for all points $v$ in the interior of the effective domain of $\eta$. Finally, we mention that the subdifferential is monotone, \ie
\begin{equation}
\label{eq:subdiff_monotone}
  \langle v^* - w^*, v - w \rangle \geq 0 \quad \forall v^\star \in \partial \eta(v),\ \forall w^\star \in \partial\eta(w).
\end{equation}
We refer the reader to \cite{MR1058436,MR2330778} for a thorough discussion on convex analysis.

\section{The fractional optimal control problem}
\label{sec:fractional}
In this section we analyze the fractional optimal control problem \eqref{eq:min}--\eqref{eq:cc}. We derive existence and uniqueness results together with first order necessary and sufficient optimality conditions. In addition, in section \ref{subsec:regularity_fractional}, we derive regularity results for the optimal variables that will be essential for deriving error estimates for the scheme proposed in section \ref{sec:approximation}.

For $J$ defined as in \eqref{eq:min}, the fractional optimal control problem reads: Find $\min J(\usf,\zsf)$ subject to \eqref{eq:state_equation} and \eqref{eq:cc}. The set of \emph{admissible controls} is defined by
\begin{equation}
\Zad := \{\zsf \in L^2(\Omega): \asf \leq \zsf(x') \leq \bsf \quad \textrm{a.e.} \quad x' \in \Omega\},
\label{eq:Zad}
\end{equation}
which is a nonempty, bounded, closed, and convex subset of ·$L^2(\Omega)$. Since we are interested in the nondifferentiable scenario, we assume that $\asf$ and $\bsf$ are real constants that satisfy the property $\asf < 0 < \bsf$ \cite[Remark 2.1]{CHW:12}.  The desired state $\usfd \in L^2(\Omega)$ while $\sigma$ and $\nu$ are both real and positive parameters.

As it is customary in optimal control theory \cite{Lions,Tbook}, to analyze \eqref{eq:min}--\eqref{eq:cc}, we introduce the so--called control to state operator.

\begin{definition}[fractional control to state map]
The map $\mathbf{S}: L^2(\Omega) \ni \zsf \mapsto \usf(\zsf) \in \Hs$, where $\usf(\zsf)$ solves  \eqref{eq:state_equation}, is called the fractional control to state map.
\label{def:control_to_state}
\end{definition}

This operator is linear and bounded from $L^2(\Omega)$ into $\Hs$ \cite[Lemma 2.2]{CDDS:11}. In addition, since $\Hs \hookrightarrow L^2(\Omega)$, we may also consider $\mathbf{S}$ acting from $L^2(\Omega)$ into itself. With this operator at hand, we define the optimal fractional state--control pair.

\begin{definition}[optimal fractional state-control pair]
A state--control pair $(\bar \usf, \bar \zsf) \in \Hs \times \Zad$ is called optimal for \eqref{eq:min}--\eqref{eq:cc} if $\bar \usf = \mathbf{S} \bar \zsf$ and
\[
J(\bar \usf, \bar \zsf) \leq J(\usf, \zsf)
\]
for all $(\usf, \zsf) \in \Hs \times \Zad $ such that $\usf = \mathbf{S}\zsf$.
\end{definition}

With these elements at hand, we present an existence and uniqueness result.

\begin{theorem}[existence and uniqueness]
The fractional optimal control problem \eqref{eq:min}--\eqref{eq:cc} has a unique optimal solution $(\bar \usf,\bar \zsf) \in \Hs \times \Zad$.
\end{theorem}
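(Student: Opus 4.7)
The plan is to apply the direct method of the calculus of variations to the reduced cost functional, exploiting the linearity of $\mathbf{S}$ and the strict convexity coming from the Tikhonov term.

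First I would eliminate the state by introducing the reduced functional
\begin{equation*}
j(\zsf) := J(\mathbf{S}\zsf,\zsf) = \tfrac{1}{2}\|\mathbf{S}\zsf - \usfd\|_{L^2(\Omega)}^2 + \tfrac{\sigma}{2}\|\zsf\|_{L^2(\Omega)}^2 + \nu \|\zsf\|_{L^1(\Omega)},
\end{equation*}
so that \eqref{eq:min}--\eqref{eq:cc} is equivalent to minimizing $j$ over $\Zad$. The set $\Zad$ is nonempty (it contains $0$), convex, closed, and bounded in $L^2(\Omega)$, hence weakly sequentially compact.

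Next I would verify that $j$ is strictly convex and weakly lower semicontinuous on $L^2(\Omega)$. Convexity of the first term follows because $\mathbf{S}$ is linear (Definition~\ref{def:control_to_state}) and $\|\cdot\|_{L^2(\Omega)}^2$ is convex; the second term is strictly convex since $\sigma>0$; the $L^1(\Omega)$-seminorm is convex. This yields strict convexity of $j$. For weak lower semicontinuity, it suffices to note that each summand is convex and continuous on $L^2(\Omega)$: the first two are obvious, and for the third one uses $\|\zsf\|_{L^1(\Omega)} \leq |\Omega|^{1/2} \|\zsf\|_{L^2(\Omega)}$ since $\Omega$ is bounded. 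A convex continuous functional on a Hilbert space is weakly lower semicontinuous.

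With these ingredients, I would run the standard argument: pick a minimizing sequence $\{\zsf_k\}\subset \Zad$ for $j$; by boundedness extract a subsequence (not relabeled) with $\zsf_k \rightharpoonup \bar\zsf$ in $L^2(\Omega)$; since $\Zad$ is weakly closed, $\bar\zsf\in\Zad$; weak lower semicontinuity gives $j(\bar\zsf)\leq \liminf_{k\to\infty} j(\zsf_k) = \inf_{\Zad} j$, so $\bar\zsf$ is a minimizer. Uniqueness of $\bar\zsf$ then follows from strict convexity of $j$ together with convexity of $\Zad$. Setting $\bar\usf := \mathbf{S}\bar\zsf \in \Hs$ produces the unique optimal pair $(\bar\usf,\bar\zsf)$.

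The only mildly delicate point is handling the nondifferentiable $L^1(\Omega)$ term, but the direct method only needs its convexity and continuity on $L^2(\Omega)$, both immediate; no subdifferential machinery is required at this stage. Everything else is a routine application of Hilbert-space weak compactness combined with the mapping properties of $\mathbf{S}$ already recorded after Definition~\ref{def:control_to_state}.
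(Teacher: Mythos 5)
Your proposal is correct and follows essentially the same route as the paper: reduce to the functional $f(\zsf)=J(\mathbf{S}\zsf,\zsf)$, establish strict convexity and weak lower semicontinuity, and use weak sequential compactness of $\Zad$ (the paper simply cites a standard existence theorem rather than unpacking the direct method as you do). The only cosmetic difference is that the paper derives strict convexity from the injectivity of $\mathbf{S}$, whereas you obtain it more directly from the Tikhonov term with $\sigma>0$, which is equally valid and slightly more economical.
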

\begin{proof}
Define the reduced cost functional
\begin{equation}
\label{eq:f}
f(\zsf) := J(\mathbf{S}\zsf,\zsf) =  \frac{1}{2} \| \mathbf{S}\zsf-\usfd\|_{L^2(\Omega)}^2 + \frac{\sigma}{2} \|\zsf \|_{L^2(\Omega)}^2 + \nu \|\zsf\|_{L^1(\Omega)}.
\end{equation}
In view of the fact that $\mathbf{S}$ is injective and continuous, it is immediate that $f$ is strictly convex and weakly lower semicontinuous. The fact that $\Zad$ is weakly sequentially compact allows us to conclude \cite[Theorem 2.14]{Tbook}.
\end{proof}

\subsection{First order optimality conditions}
\label{subsec:first_order_fractional}
The reduced cost functional $f$ is a proper strictly convex function. However, it contains the $L^1(\Omega)$--norm of the control variable and therefore it is not nondifferentiable at $0 \in L^2(\Omega)$. This leads to some difficulties in the analysis and discretization of \eqref{eq:min}--\eqref{eq:cc}, that can be overcome by using some elementary convex analysis \cite{MR1058436,MR2330778}. With this we shall obtain explicit optimality conditions for problem \eqref{eq:min}--\eqref{eq:cc}. We begin with the following  classical result; see, for instance, \cite[Chapter 4]{MR2330778}.

\begin{lemma}
Let $f$ be defined as in \eqref{eq:f}. The element $\bar \zsf \in \Zad$ is a minimizer of $f$ over $\Zad$ if and only if there exists a subgradient $\lambda^{\star} \in \partial f(\bar \zsf)$ such that
\[
 (\lambda^{\star}, \zsf - \bar \zsf)_{L^2(\Omega)} \geq 0
\]
for all $\zsf \in \Zad$.
\label{le:minimizer}
\end{lemma}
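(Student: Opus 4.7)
The statement is the standard first-order optimality condition for a convex function minimized over a convex set, so the plan splits cleanly into sufficiency and necessity.

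\emph{Sufficiency} is immediate. Given $\lambda^{\star}\in\partial f(\bar\zsf)$ with $(\lambda^{\star},\zsf-\bar\zsf)_{L^2(\Omega)}\geq 0$ for every $\zsf\in\Zad$, I would invoke the defining subgradient inequality \eqref{eq:subgradient} with $v=\bar\zsf$ and $w=\zsf$ to obtain
\[
f(\zsf)-f(\bar\zsf)\;\geq\;(\lambda^{\star},\zsf-\bar\zsf)_{L^2(\Omega)}\;\geq\;0
\qquad\forall\,\zsf\in\Zad,
\]
so $\bar\zsf$ minimizes $f$ over $\Zad$.

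\emph{Necessity} is the substantive direction. My approach is the indicator-function reformulation: let $I_{\Zad}$ denote the convex indicator of $\Zad$, and set $g:=f+I_{\Zad}$. Then $\bar\zsf$ minimizes $f$ on $\Zad$ if and only if it is an unconstrained minimizer of $g$ on $L^2(\Omega)$, equivalently $0\in\partial g(\bar\zsf)$. Because $f$ is finite and continuous on all of $L^2(\Omega)$ (the quadratic terms are continuous, and $\nu\|\cdot\|_{L^1(\Omega)}$ is continuous on $L^2(\Omega)$ via $\|\zsf\|_{L^1(\Omega)}\leq|\Omega|^{1/2}\|\zsf\|_{L^2(\Omega)}$), the Moreau--Rockafellar sum rule applies and yields $\partial g(\bar\zsf)=\partial f(\bar\zsf)+\partial I_{\Zad}(\bar\zsf)$. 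Hence there exist $\lambda^{\star}\in\partial f(\bar\zsf)$ and $\mu^{\star}\in\partial I_{\Zad}(\bar\zsf)$ with $\lambda^{\star}+\mu^{\star}=0$. Finally, $\partial I_{\Zad}(\bar\zsf)$ is precisely the normal cone $N_{\Zad}(\bar\zsf)$, so $\mu^{\star}=-\lambda^{\star}$ satisfies $(\mu^{\star},\zsf-\bar\zsf)_{L^2(\Omega)}\leq 0$ for every $\zsf\in\Zad$, which is exactly $(\lambda^{\star},\zsf-\bar\zsf)_{L^2(\Omega)}\geq 0$.

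The one point requiring care, and hence the main obstacle, is invoking the sum rule: it requires a constraint-qualification-type condition, here supplied by the fact that $f$ has nonempty interior in its effective domain at $\bar\zsf$ (indeed, its domain is all of $L^2(\Omega)$ and $f$ is continuous there). The identification $\partial I_{\Zad}(\bar\zsf)=N_{\Zad}(\bar\zsf)$ is routine from \eqref{eq:subgradient} applied to $I_{\Zad}$. A direct alternative would pass through the one-sided directional derivative $f'(\bar\zsf;\zsf-\bar\zsf)\geq 0$ obtained by testing $\bar\zsf+t(\zsf-\bar\zsf)\in\Zad$ for $t\in(0,1]$, but extracting a \emph{single} subgradient valid for every direction then requires a separation argument, which is why the indicator-function route is cleaner.
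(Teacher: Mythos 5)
Your proof is correct. The paper offers no proof of this lemma, merely citing it as a classical result from a convex analysis text, and your indicator-function/Moreau--Rockafellar argument (with the continuity of $f$ on all of $L^2(\Omega)$ correctly verified to justify the sum rule, and the easy direct sufficiency step) is precisely the standard argument the paper is deferring to.
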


In order to explore the previous optimality condition, we introduce the following ingredients.

\begin{definition}[fractional adjoint state]
For a given control $\zsf \in \Zad$, the fractional adjoint state $\psf \in \Hs$, associated to $\zsf$, is defined as $\psf = \mathbf{S}^{\dagger}(\mathbf{S} \zsf - \usfd)$, where $\mathbf{S}^\dagger$ denotes the $L^2$--adjoint of $\mathbf{S}$.
\label{def:fractional_adjoint_state}
\end{definition}

We also define the convex and Lipschitz function $\psi: L^1(\Omega) \rightarrow \mathbb{R}$ by $\psi(\zsf) :=  \|  \zsf \|_{L^1(\Omega)}$ ---  the nondifferentiable component of the cost functional $f$ --- and 
\begin{equation}                                                                                                                                                                                                                                             \varphi:  L^2(\Omega) \rightarrow \mathbb{R}, \qquad \zsf \mapsto  \varphi(\zsf):=   \frac{1}{2} \|\mathbf{S}\zsf-\usfd\|_{L^2(\Omega)}^2 + \frac{\sigma}{2} \|\zsf \|_{L^2(\Omega)}^2,     
\label{eq:varphi}
\end{equation}
the differentiable component of $f$. Standard arguments yield that $\varphi$ is Fr\'echet differentiable with $\varphi'(\zsf) = \mathbf{S}^{\dagger} (\mathbf{S} \zsf - \usfd) + \sigma \zsf$ \cite[Theorem 2.20]{Tbook}.
Now, invoking Definition \ref{def:fractional_adjoint_state}, we obtain that, for $\zsf \in \Zad$, we have
\begin{equation}
\varphi'(\zsf) = \psf + \sigma \zsf.
\label{eq:varphi_prime}
\end{equation}
It is rather standard to see that $\lambda \in \partial \psi(\zsf)$ if and only if the relations
\begin{equation}
\lambda(x') =1, \; \zsf(x') > 0,  \qquad   \lambda(x') = - 1, \;  \zsf(x') < 0, \qquad  \lambda(x') \in [-1,1], \; \zsf(x') = 0
\label{eq:subdif_L1}
\end{equation}
hold for \textrm{a.e.} $x' \in \Omega$. With these ingredients at hand, we obtain the following necessary and sufficient optimality conditions for our optimal control problem; see also \cite[Theorem 3.1]{CHW:12} and \cite[Lemma 2.2]{MR2826983}.

\begin{theorem}[optimality conditions]
The pair $(\bar \usf, \bar \zsf) \in \Hs \times \Zad$ is optimal for problem \eqref{eq:min}--\eqref{eq:cc} if and only if $\bar \usf = \mathbf{S} \bar \zsf$ and $\bar \zsf$ satisfies the variational inequality
\begin{equation}
\label{eq:optimality_condition_fractional} 
\left( \bar \psf + \sigma \bar \zsf + \nu \bar \lambda, \zsf - \bar \zsf  \right)_{L^2(\Omega)} \geq 0 \quad \forall \zsf \in \Zad,
\end{equation}
where $\bar \psf = \mathbf{S}^{\dagger}(\mathbf{S} \bar \zsf - \usfd)$ and $\bar \lambda \in \partial \psi (\bar \zsf)$.
\label{th:optimality_condition_fractional}
\end{theorem}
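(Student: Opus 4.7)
The plan is to derive this characterization by combining the abstract optimality principle of Lemma~\ref{le:minimizer} with an explicit computation of the subdifferential of the reduced cost $f$. Concretely, I would decompose $f = \varphi + \nu\psi$, where $\varphi$ is the smooth part defined in \eqref{eq:varphi} and $\psi(\zsf)=\|\zsf\|_{L^1(\Omega)}$ is the nonsmooth part, and then translate $\partial f(\bar\zsf)$ into an explicit element involving the adjoint state $\bar\psf$ and a subgradient $\bar\lambda\in\partial\psi(\bar\zsf)$.

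\textbf{Step 1 (apply the abstract criterion).} Since the fractional control-to-state map $\mathbf{S}$ is linear and continuous from $L^2(\Omega)$ into $\Hs \hookrightarrow L^2(\Omega)$, the reduced functional $f$ from \eqref{eq:f} is proper, convex and continuous on $L^2(\Omega)$. As shown in the existence proof, the optimal control problem \eqref{eq:min}--\eqref{eq:cc} is equivalent to the minimization of $f$ over $\Zad$, so Lemma~\ref{le:minimizer} yields that $\bar\zsf$ is optimal if and only if there exists $\lambda^{\star}\in\partial f(\bar\zsf)$ with $(\lambda^\star,\zsf-\bar\zsf)_{L^2(\Omega)}\geq 0$ for all $\zsf\in\Zad$.

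\textbf{Step 2 (compute $\partial f$ via the sum rule).} Because $\varphi$ is Fr\'echet differentiable on $L^2(\Omega)$, the standard sum rule of convex analysis \cite{MR1058436,MR2330778} applies without any qualification condition and gives
\begin{equation*}
\partial f(\bar\zsf) \;=\; \{\varphi'(\bar\zsf)\} + \nu\,\partial\psi(\bar\zsf).
\end{equation*}
Using the identity \eqref{eq:varphi_prime}, every $\lambda^{\star}\in\partial f(\bar\zsf)$ admits the representation $\lambda^{\star} = \bar\psf + \sigma\bar\zsf + \nu\bar\lambda$ with $\bar\psf = \mathbf{S}^{\dagger}(\mathbf{S}\bar\zsf-\usfd)$ and $\bar\lambda\in\partial\psi(\bar\zsf)$; conversely, any such combination lies in $\partial f(\bar\zsf)$.

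\textbf{Step 3 (assemble the variational inequality).} Inserting this representation into the inequality from Step~1 yields \eqref{eq:optimality_condition_fractional}, and conversely any $(\bar\usf,\bar\zsf,\bar\lambda)$ satisfying \eqref{eq:optimality_condition_fractional} together with $\bar\usf=\mathbf{S}\bar\zsf$ and $\bar\lambda\in\partial\psi(\bar\zsf)$ produces a valid $\lambda^{\star}\in\partial f(\bar\zsf)$ and hence, via Lemma~\ref{le:minimizer}, the optimality of $\bar\zsf$.

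The only delicate point is the application of the convex sum rule; this is, however, immediate since $\varphi$ is continuously (in fact Fr\'echet) differentiable on all of $L^2(\Omega)$, so no constraint qualification is required. The remainder is bookkeeping: identifying $\varphi'(\bar\zsf)$ with $\bar\psf+\sigma\bar\zsf$ through Definition~\ref{def:fractional_adjoint_state}, and recognizing that the pointwise description \eqref{eq:subdif_L1} of $\partial\psi$ is exactly what will be needed later to make \eqref{eq:optimality_condition_fractional} constructive.
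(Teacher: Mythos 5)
Your proposal is correct and follows essentially the same route as the paper: decompose $f=\varphi+\nu\psi$, use that $\partial\varphi(\bar\zsf)=\{\varphi'(\bar\zsf)\}$ by Fr\'echet differentiability, apply the convex sum rule to get $\partial f(\bar\zsf)=\varphi'(\bar\zsf)+\nu\,\partial\psi(\bar\zsf)$, and combine with Lemma~\ref{le:minimizer} and \eqref{eq:varphi_prime}. Your added remark that no constraint qualification is needed for the sum rule is a correct and slightly more explicit justification than the paper provides, but it is not a different argument.
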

\begin{proof}
Since the convex function $\varphi$ is Fr\'echet differentiable we immediately have that $\partial \varphi(\bar \zsf) = \varphi'(\bar\zsf)$ \cite[Proposition 4.1.8]{MR2330778}. We thus apply the sum rule \cite[Proposition 4.5.1]{MR2330778} to conclude, in view of the fact that $\psi$ is convex, that $\partial f (\bar \zsf) = \varphi'(\bar \zsf) + \nu \partial \psi(\bar \zsf)$. This, combined with Lemma \ref{le:minimizer} and \eqref{eq:varphi_prime} imply the desired variational inequality \eqref{eq:optimality_condition_fractional}.
\end{proof}

To present the following result we introduce, for $a,b\in \R$, the projection formula
\[
\Proj_{[a,b]} \wsf(x'):=\min\left\{b,\max\left\{a,\wsf(x')\right\}\right\}.
\]

\begin{corollary}[projection formulas]
Let $\bar \zsf$, $\bar \usf$, $\bar \psf$ and $\bar \lambda$ be as in Theorem \ref{th:optimality_condition_fractional}. Then, we have that 
\begin{align} 
  \bar \zsf(x') & = \Proj_{[\asf,\bsf]}\left( -\frac{1}{\sigma} \left( \bar \psf(x') + \nu \bar \lambda(x') \right) \right),
  \label{eq:projection_z}
  \\
  \bar \zsf(x') &= 0 \quad \Leftrightarrow \quad |\bar \psf (x')| \leq \nu,
  \label{eq:sparsity}
  \\
  \bar \lambda(x') & = \Proj_{[-1,1]}\left( -\frac{1}{\nu}  \bar \psf(x')  \right).
  \label{eq:projection_lambda}
\end{align}
\end{corollary}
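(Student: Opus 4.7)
The plan is to \emph{localize} the variational inequality \eqref{eq:optimality_condition_fractional} and then combine it with the pointwise description \eqref{eq:subdif_L1} of the subdifferential $\partial\psi$. Since the integrand in \eqref{eq:optimality_condition_fractional} is a bounded measurable function (recall $\bar\psf\in\Hs\hookrightarrow L^2(\Omega)$, $\bar\zsf,\bar\lambda\in L^\infty(\Omega)$), a standard density/measurability argument, applied to test functions of the form $\zsf = \bar\zsf\chi_{\Omega\setminus E} + z\chi_E$ with $E\subset\Omega$ of arbitrarily small measure and $z\in[\asf,\bsf]$, converts \eqref{eq:optimality_condition_fractional} into the \emph{pointwise} variational inequality
\begin{equation}
\label{eq:plan_pw}
\bigl(\bar\psf(x') + \sigma\bar\zsf(x') + \nu\bar\lambda(x')\bigr)\bigl(z-\bar\zsf(x')\bigr)\ge 0\qquad \forall z\in[\asf,\bsf],\ \textrm{a.e.\ } x'\in\Omega.
\end{equation}

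From \eqref{eq:plan_pw} the first formula follows immediately: for each $x'$ the value $\bar\zsf(x')\in[\asf,\bsf]$ is characterized as the unique minimizer over $[\asf,\bsf]$ of $z\mapsto\tfrac{\sigma}{2}\bigl(z+\sigma^{-1}(\bar\psf(x')+\nu\bar\lambda(x'))\bigr)^2$, which is precisely the projection \eqref{eq:projection_z}.

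Next I would derive \eqref{eq:sparsity} and \eqref{eq:projection_lambda} simultaneously by a case analysis on the sign of $\bar\zsf(x')$, using \eqref{eq:subdif_L1}. If $\bar\zsf(x')>0$, then $\bar\lambda(x')=1$; substituting into \eqref{eq:projection_z} and using that the projection lands strictly above zero forces $-\sigma^{-1}(\bar\psf(x')+\nu)>0$, i.e.\ $\bar\psf(x')<-\nu$, whence $\Proj_{[-1,1]}(-\bar\psf(x')/\nu)=1=\bar\lambda(x')$. The symmetric argument handles $\bar\zsf(x')<0$ and yields $\bar\psf(x')>\nu$. Finally, when $\bar\zsf(x')=0$, \eqref{eq:plan_pw} reduces to $(\bar\psf(x')+\nu\bar\lambda(x'))z\ge 0$ for every $z\in[\asf,\bsf]$; the assumption $\asf<0<\bsf$ is used crucially here to test with $z$ of either sign, forcing $\bar\psf(x')+\nu\bar\lambda(x')=0$, hence $\bar\lambda(x')=-\bar\psf(x')/\nu$, and $|\bar\lambda(x')|\le 1$ then gives $|\bar\psf(x')|\le\nu$. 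Conversely, if $|\bar\psf(x')|\le\nu$, the two contrapositives of the previous implications exclude $\bar\zsf(x')\neq 0$, so $\bar\zsf(x')=0$, completing the equivalence \eqref{eq:sparsity}; the same identity $\bar\lambda(x')=-\bar\psf(x')/\nu$ matches $\Proj_{[-1,1]}(-\bar\psf(x')/\nu)$ in this range, giving \eqref{eq:projection_lambda}.

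The only genuinely delicate point is the last case: although $\partial\psi(\zsf)$ is multi-valued on $\{\zsf=0\}$, the particular $\bar\lambda$ furnished by Theorem~\ref{th:optimality_condition_fractional} is \emph{pinned down} on $\{\bar\zsf=0\}$ by the variational inequality, and this pinning relies on the standing hypothesis $\asf<0<\bsf$ that permits test values $z$ of both signs at $0$. Once this subtlety is recognized, the remainder is routine pointwise analysis.
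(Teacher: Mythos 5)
Your argument is correct. Note that the paper does not actually prove this corollary---its ``proof'' is a one-line citation to \cite[Corollary 3.2]{CHW:12}---so what you have written is a self-contained version of the standard argument from that reference: localize the variational inequality \eqref{eq:optimality_condition_fractional} to a pointwise inequality on $[\asf,\bsf]$, read off \eqref{eq:projection_z} as the optimality condition for a scalar quadratic over an interval, and then run the sign case analysis against \eqref{eq:subdif_L1} to get \eqref{eq:sparsity} and \eqref{eq:projection_lambda}. All three cases check out, and you correctly identify the two places where the hypothesis $\asf<0<\bsf$ is essential: it makes $\Proj_{[\asf,\bsf]}(w)>0$ equivalent to $w>0$ (and symmetrically for negative values), and it allows test values of both signs at points where $\bar\zsf(x')=0$, which is what pins down $\bar\lambda$ there. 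One small imprecision: the integrand in \eqref{eq:optimality_condition_fractional} is not bounded, since $\bar\psf\in\Hs\hookrightarrow L^2(\Omega)$ need not lie in $L^\infty(\Omega)$; it is merely in $L^1(\Omega)$ (product of $L^2$ functions), but the localization still goes through by testing on shrinking balls around Lebesgue points of the integrand, so this does not affect the validity of the argument.
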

\begin{proof}
See \cite[Corollary 3.2]{CHW:12}. 
\end{proof}

\begin{remark}[sparsity]
We comment that property \eqref{eq:sparsity} implies the sparsity of the optimal control $\bar \zsf$. We refer the reader to \cite[Section 2]{MR2556849} for a thorough discussion on this matter.
\end{remark}

\subsection{Regularity estimates}
\label{subsec:regularity_fractional}

Having obtained conditions that guarantee the existence and uniqueness for problem \eqref{eq:min}--\eqref{eq:cc}, we now study the regularity properties of its optimal variables. This is important since, as it is well known, smoothness and rate of approximation go hand in hand. Consequently, any rigorous study of an approximation scheme must be concerned with the regularity of the optimal variables. Here, on the the basis of a bootstraping argument inspired by \cite{MR3429730,MR3504977}, we obtain such regularity results.

\begin{theorem}[regularity results for $\bar \zsf$ and $\bar \lambda$]
 If $\usfd \in \mathbb{H}^{1-s}(\Omega)$, then the optimal control for problem \eqref{eq:min}--\eqref{eq:cc} satisfies that $\bar \zsf \in H_0^1(\Omega)$. In addition, the subgradient $\bar \lambda$, given by \eqref{eq:projection_lambda}, 
 satisfies that $\bar \lambda \in  H_0^1(\Omega)$.
 \label{th:regularity_z_lambda}
\end{theorem}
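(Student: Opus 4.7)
My plan is a bootstrap argument built around the projection formulas \eqref{eq:projection_z} and \eqref{eq:projection_lambda} together with the smoothing properties of the control-to-state map. Since $\bar{\zsf}$ takes values in $[\asf,\bsf]$, it is a priori in $L^{\infty}(\Omega)\subset L^2(\Omega)=\mathbb{H}^0(\Omega)$. The spectral representation \eqref{def:second_frac} shows that $\mathbf{S}=\mathcal{L}^{-s}$ acts as an isomorphism $\mathbb{H}^r(\Omega)\to\mathbb{H}^{r+2s}(\Omega)$, and the same holds for $\mathbf{S}^{\dagger}$ since $\mathcal{L}^s$ is selfadjoint. Therefore $\bar{\usf}=\mathbf{S}\bar{\zsf}\in\mathbb{H}^{2s}(\Omega)$.

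I would then exploit the hypothesis $\usfd\in\mathbb{H}^{1-s}(\Omega)$ to bootstrap the regularity of the adjoint. If $s\geq 1/3$, then $2s\geq 1-s$, so $\bar{\usf}\in\mathbb{H}^{2s}(\Omega)\hookrightarrow\mathbb{H}^{1-s}(\Omega)$ and $\bar{\usf}-\usfd\in\mathbb{H}^{1-s}(\Omega)$; hence
\[
\bar{\psf}=\mathbf{S}^{\dagger}(\bar{\usf}-\usfd)\in\mathbb{H}^{1+s}(\Omega)\subset H^{1+s}(\Omega)\cap H^1_0(\Omega),
\]
using the characterization of $\mathbb{H}^r$ for $r\in(1,2]$ recalled in section~\ref{sub:fractional_powers}. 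For $s<1/3$ only a partial gain is available in one step, and I would iterate: the intermediate regularity $\bar{\psf}\in\mathbb{H}^{4s}(\Omega)$ can be fed into \eqref{eq:projection_z}--\eqref{eq:projection_lambda}, using that the pointwise truncation $\Proj_{[a,b]}$ is $1$--Lipschitz and hence acts continuously on $\mathbb{H}^{t}(\Omega)$ for $t\in[0,1]$; this produces $\bar{\zsf},\bar{\lambda}\in\mathbb{H}^{4s}(\Omega)$, and reapplying $\mathbf{S}$ returns a more regular state $\bar{\usf}\in\mathbb{H}^{6s}(\Omega)$. The procedure terminates after finitely many steps, once the regularity of $\bar{\usf}$ reaches $1-s$, ultimately yielding $\bar{\psf}\in\mathbb{H}^{1+s}(\Omega)\subset H^1_0(\Omega)$.

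With $\bar{\psf}\in H^1_0(\Omega)$ in hand, I would apply the projection formulas one last time. By Stampacchia's theorem the pointwise truncation preserves $H^1(\Omega)$, and because $0\in[-1,1]$ and $0\in[\asf,\bsf]$ (here the assumption $\asf<0<\bsf$ is essential), the vanishing boundary trace of $\bar{\psf}$ is mapped to the vanishing trace of the truncation. This gives $\bar{\lambda}=\Proj_{[-1,1]}(-\bar{\psf}/\nu)\in H^1_0(\Omega)$, and then the identical argument applied to $\bar{\psf}+\nu\bar{\lambda}\in H^1_0(\Omega)$ yields $\bar{\zsf}\in H^1_0(\Omega)$ via \eqref{eq:projection_z}.

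\textbf{Main obstacle.} The chief technical point is verifying that each step of the bootstrap genuinely advances along the $\mathbb{H}^t$--scale: one needs that the Lipschitz truncation $\Proj_{[a,b]}$ preserves fractional Sobolev regularity up to $t=1$ — not merely $L^2$ regularity — so the iteration closes, and one must carefully track the zero boundary values through the entire chain in order to distinguish $H^1_0(\Omega)$ from $H^1(\Omega)$, which is precisely where the sign condition $\asf<0<\bsf$ enters decisively.
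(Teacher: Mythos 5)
Your proposal is correct and follows essentially the same route as the paper: an initial lift $\bar\usf\in\mathbb{H}^{2s}(\Omega)$, a bootstrap through the projection formulas using that Lipschitz truncations preserve fractional regularity (the nonlinear operator interpolation result of \cite[Lemma 28.1]{Tartar} that the paper invokes), and a final application of Stampacchia's theorem with the condition $\asf<0<\bsf$ to preserve zero boundary values. The only cosmetic difference is your one-step threshold $s\geq 1/3$ versus the paper's sharper $s\geq 1/4$ (which needs only $\min\{4s,1+s\}\geq 1$ rather than $\bar\usf-\usfd\in\mathbb{H}^{1-s}(\Omega)$), but your iteration covers the gap anyway.
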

\begin{proof}
We begin the proof by invoking the convexity of $\Omega$, the fact that $\calLs$ is a pseudodifferential operator of order $2s$ and that $\bar \zsf \in \Zad \subset L^2(\Omega)$ to conclude that 
\begin{equation}
 \label{eq:basic_regularity}
 \bar \usf \in \mathbb{H}^{2s}(\Omega), \qquad  \bar \psf \in 
\mathbb{H}^{\kappa}(\Omega), \quad \kappa = \min \{4s, 1+s, 2 \};
\end{equation}
the space $\mathbb{H}^{\delta}(\Omega)$, for $\delta \in (0,2]$, was characterized in Section \ref{sub:fractional_powers}. We now consider the following cases:

\noindent \framebox{\emph{Case 1}, $s \in \left[\frac{1}{4},1\right)$:} We immediately obtain that $\bar \psf \in H_0^1(\Omega)$. This, in view of the projection formula \eqref{eq:projection_lambda} and \cite[Theorem A.1]{KSbook} implies that $\bar \lambda \in H_0^1(\Omega)$; notice that formula \eqref{eq:projection_lambda} preserves boundary values. Now, since both functions $\bar \psf$ and $\bar \lambda$ belong to $H_0^1(\Omega)$, an application, again, of \cite[Theorem A.1]{KSbook} and the projection formula \eqref{eq:projection_z}, for $\bar \zsf$, implies that $\bar \zsf \in H_0^1(\Omega)$. We remark that, in view of the assumption $\asf < 0 < \bsf$, the formula \eqref{eq:projection_z} also preserves boundary values.

\noindent \framebox{\emph{Case 2}, $s \in \left(0,\tfrac{1}{4}\right)$:} We now begin the bootstrapping argument like that in \cite[Lemma 3.5]{MR3429730}. In this case, \eqref{eq:basic_regularity} implies that $\bar \psf \in \mathbb{H}^{4s}(\Omega)$. This, on the basis of a nonlinear operator interpolation result as in \cite[Lemma 3.5]{MR3429730}, that follows from \cite[Lemma 28.1]{Tartar}, guarantees that $\bar \lambda \in \mathbb{H}^{4s}(\Omega)$. We notice, once again, that formula  \eqref{eq:projection_lambda} preserves boundary values. Similar arguments allow us to derive that $\bar \zsf \in \mathbb{H}^{4s}(\Omega)$.

\framebox{\emph{Case 2.1}, $s \in \left[\tfrac{1}{8},\tfrac{1}{4}\right)$:} Since $\bar \zsf \in \mathbb{H}^{4s}(\Omega)$, we conclude that $\bar \usf \in \mathbb{H}^{6s}(\Omega)$ and that $\bar \psf \in \mathbb{H}^{\varepsilon}(\Omega)$, where $\varepsilon = \min \{ 8s, 1+s\}$. We now invoke that $s \in \left[\tfrac{1}{8},\tfrac{1}{4}\right)$ to deduce that $\bar \psf \in H_0^{1}(\Omega)$. This, in view of \eqref{eq:projection_lambda}, implies that $\bar \lambda \in H_0^1(\Omega)$, which in turns, and as a consequence of \eqref{eq:projection_z}, allows us to derive that $\bar \zsf \in H_0^1(\Omega)$.

\framebox{\emph{Case 2.2},  $s \in \left(0,\tfrac{1}{8}\right)$:} As in Case 2.1 we have that $\bar \psf \in \mathbb{H}^{8s}(\Omega)$. We now invoke, again, a nonlinear operator interpolation argument to conclude that $\bar \lambda \in \mathbb{H}^{8s}(\Omega)$ and then that $\bar \zsf \in \mathbb{H}^{8s}(\Omega)$. These regularity results imply that $\bar \usf \in \mathbb{H}^{10s}(\Omega)$ and then that $\bar \psf \in \mathbb{H}^{\iota}(\Omega)$, where $\iota = \min \{ 12s, 1+s\}$. 

\indent \indent \framebox{\emph{Case 2.2.1}, $s \in \left(\tfrac{1}{12},\tfrac{1}{8}\right]$:} We immediately obtain that $\bar \psf \in H_0^1(\Omega)$. This implies that $\bar \lambda \in H_0^1(\Omega)$, and thus that $\bar \zsf \in H_0^1(\Omega)$.

\indent \indent \framebox{\emph{Case 2.2.2},  $s \in \left(0,\tfrac{1}{12}\right]$:} We proceed as before.

After a finite number of steps we can thus conclude that, for any $s \in (0,1)$, $\bar \lambda$ and $\bar \zsf$ belong to $H_0^1(\Omega)$. This concludes the proof.
\end{proof}

As a by-product of the proof of the previous theorem, we obtain the following regularity result for the optimal state and optimal adjoint state.

\begin{corollary}[regularity results for $\bar \usf$ and $\bar \psf$]
  If $\usfd \in \Ws$, then $\bar \usf \in \mathbb{H}^{l}(\Omega)$, where $l = \min \{ 1+ 2s,2 \}$ and $\bar \psf \in \mathbb{H}^\varpi(\Omega)$, where $\varpi = \min \{ 1+s, 2\}$.
\end{corollary}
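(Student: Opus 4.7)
The plan is to exploit the $H^1_0(\Omega)$ regularity of $\bar\zsf$ just proven in Theorem \ref{th:regularity_z_lambda} and bootstrap once more through the state and adjoint equations. Recall from \eqref{eq:basic_regularity} that when only $\bar\zsf \in L^2(\Omega)$ was available we obtained $\bar\usf \in \mathbb{H}^{2s}(\Omega)$ and $\bar\psf \in \mathbb{H}^{\min\{4s,1+s,2\}}(\Omega)$; the point now is that $\bar\zsf$ is actually one order smoother, which lifts both estimates by exactly $1$.

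For the state, the equation $\calLs \bar\usf = \bar\zsf$ together with $\bar\zsf \in H^1_0(\Omega)$ (which, in the notation of subsection \ref{sub:fractional_powers}, is $\mathbb{H}^1(\Omega)$) and the fact that $\calLs$ is a pseudodifferential operator of order $2s$ formally gives $\bar\usf \in \mathbb{H}^{1+2s}(\Omega)$. The cap at $2$ is due to the convexity of the polytopal domain $\Omega$: standard elliptic regularity for $\calL$ limits the scale to $\mathbb{H}^2(\Omega) = H^2(\Omega)\cap H^1_0(\Omega)$ (see the characterization of $\mathbb{H}^\delta(\Omega)$ for $\delta \in (1,2]$ recalled in subsection \ref{sub:fractional_powers}). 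Therefore $\bar\usf \in \mathbb{H}^l(\Omega)$ with $l = \min\{1+2s,2\}$.

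For the adjoint, I would use that $\bar\psf$ solves $\calLs \bar\psf = \bar\usf - \usfd$. Since $l \geq 1 \geq 1-s$, the embedding $\mathbb{H}^l(\Omega) \hookrightarrow \mathbb{H}^{1-s}(\Omega)$ and the assumption $\usfd \in \Ws = \mathbb{H}^{1-s}(\Omega)$ yield $\bar\usf - \usfd \in \mathbb{H}^{1-s}(\Omega)$. Applying again the order-$2s$ lifting together with the $H^2$ cap imposed by the convex polytope, we obtain $\bar\psf \in \mathbb{H}^{(1-s)+2s}(\Omega) \cap \mathbb{H}^2(\Omega) = \mathbb{H}^\varpi(\Omega)$ with $\varpi = \min\{1+s,2\}$.

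The only subtlety, and the step I expect to require the most care, is justifying the regularity shift by $2s$ in the two required ranges: one must make sure that the pseudodifferential calculus / spectral shift $\calLs : \mathbb{H}^{\beta+2s}(\Omega) \to \mathbb{H}^{\beta}(\Omega)$ is valid for the relevant values of $\beta$, and that the resulting exponent is then truncated at $2$ by the convex polytopal nature of $\Omega$ rather than by any spectral obstruction. Once this lifting is in hand (as used already in the proof of Theorem \ref{th:regularity_z_lambda} and justified by the spectral definition \eqref{def:second_frac} together with $H^2$-regularity for $\calL$ on convex polytopes), the two stated estimates follow immediately, which is why the result is recorded as a corollary rather than as an independent theorem.
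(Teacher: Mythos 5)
Your proposal is correct and is essentially the argument the paper intends: the corollary is stated as a by-product of the proof of Theorem \ref{th:regularity_z_lambda}, namely one final pass through the state and adjoint equations using $\bar\zsf\in H^1_0(\Omega)=\mathbb{H}^1(\Omega)$ and the order-$2s$ lifting of $\calL^{-s}$ on the spectral scale, with the exponent capped at $2$ by the $H^2$-regularity available on the convex polytope. The only cosmetic quibble is the notation $\mathbb{H}^{1+s}(\Omega)\cap\mathbb{H}^{2}(\Omega)$, which should read $\mathbb{H}^{\min\{1+s,2\}}(\Omega)$ since you mean the minimum of the two attainable exponents, not an intersection of spaces.
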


\section{The extended optimal control problem}
\label{sec:extended}

In this section we invoke the localization results of Caffarelli and Silvestre \cite{CS:07} and their extensions \cite{CDDS:11,ST:10} to circumvent the nonlocality of the operator $\calLs$ in the state equation \eqref{eq:state_equation}. We follow \cite{MR3429730} and consider the \emph{equivalent extended optimal control problem:} Find $\min\{ J(\tr \ue,\zsf) : \ue \in \HL(y^{\alpha},\C), \zsf \in \Zad \}$ subject to the \emph{extended state equation}:
\begin{equation}
\label{eq:state_equation_extended}
\ue \in \HL(y^{\alpha},\C): \quad a(\ue,\phi)  = ( \zsf, \tr \phi )_{L^2(\Omega)} \quad \forall \phi \in \HL(y^{\alpha},\C),
\end{equation}
where, for all $w,\phi \in \HL(y^{\alpha},\C)$, the bilinear form $a$ is defined by
\begin{equation}
a(w,\phi) = \frac{1}{d_s} \int_{\C}  {y^{\alpha}\mathbf{A}(x',y)} \nabla w \cdot \nabla \phi + y^{\alpha} c(x') w \phi  \diff x .
\label{eq:a}
\end{equation}

To describe the optimality conditions we introduce the \emph{extended adjoint problem}: 
\begin{equation}
\label{eq:adjoint_equation_extended}
\pe \in \HL(y^{\alpha},\C): \quad a(\phi,\pe)  =  ( \tr \ue - \usfd , \tr \phi )_{L^2(\Omega)} \quad 
\forall \phi \in \HL(y^{\alpha},\C).
\end{equation}
The optimality conditions in this setting now read as follows: the pair $(\bar{\ue},\ozsf) \in \HL(y^{\alpha},\C) \times \Zad$ is optimal if and only if $\bar{\ue} = \ue(\ozsf)$ solves \eqref{eq:state_equation_extended} and
\begin{equation}
\label{op_extended}
  (\tr \bar{\pe} + \sigma \ozsf + \nu \bar \lambda, \zsf - \ozsf )_{L^2(\Omega)} \geq 0 \quad \forall \zsf \in \Zad,
\end{equation}
where $\bar\pe = \bar\pe(\ozsf) \in \HL(y^{\alpha},\C)$ solves \eqref{eq:adjoint_equation_extended} and $\bar \lambda \in \partial \psi(\bar \zsf)$ .

The results of Caffarelli and Silvestre \cite{CS:07,CDDS:11,ST:10} yield that $\tr \oue = \ousf$ and $\tr \ope = \opsf$, where $\bar \usf \in \Hs$ solves \eqref{eq:state_equation} and $\bar \psf \in \Hs$ is as in Definition \ref{def:fractional_adjoint_state}. This implies the equivalence of the fractional and extended optimal control problems; see also \cite[Theorem 3.12]{MR3429730}.

\section{The truncated optimal control problem}
\label{sec:truncated}
The state equation \eqref{eq:state_equation_extended} of the extended optimal control problem is posed on the infinite domain $\C$ and thus it cannot be directly approximated with finite element--like techniques. However, the result of Proposition \ref{pro:exponential_decay} below shows that the  optimal extended state $\bar \ue$ decays exponentially in the extended variable $y$. This suggests to truncate $\C$ to $\C_{\Y} = \Omega \times (0,\Y)$, for a suitable truncation parameter $\Y$, and seek solutions in this bounded domain.

\begin{proposition}[exponential decay]
\label{pro:exponential_decay}
For every $\Y \geq 1$, the optimal state $\oue = \oue(\ozsf) \in \HL(y^{\alpha},\C)$,
solution to problem \eqref{eq:state_equation_extended}, satisfies
\begin{equation}
\label{eq:exponential_decay}
  \|\nabla \oue \|_{L^2(y^{\alpha},\Omega \times (\Y,\infty))} \lesssim e^{-\sqrt{\lambda_1} \Y/2}
  \| \ozsf \|_{\Hsd},
\end{equation}
where $\lambda_1$ denotes the first eigenvalue of the operator $\mathcal{L}$.
\end{proposition}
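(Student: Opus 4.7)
The plan is to exploit the spectral decomposition of $\calL$ to reduce the semi-infinite problem \eqref{eq:state_equation_extended} to a family of one-dimensional two-point problems in the extended variable $y$, solve each explicitly in terms of modified Bessel functions, and then control the weighted $L^2$ norm of $\nabla\oue$ on $\Omega\times(\Y,\infty)$ using the exponential decay of those Bessel functions.

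Concretely, I would first expand the optimal state and the optimal control in the eigenbasis $\{\varphi_k\}$ of $\calL$, writing $\oue(x',y) = \sum_{k\geq 1} \oue_k(y)\varphi_k(x')$ and $\ozsf(x') = \sum_{k\geq 1} \ozsf_k \varphi_k(x')$. Testing \eqref{eq:state_equation_extended} against $\varphi_j(x')\phi(y)$, for smooth $\phi$ that vanishes at infinity, and using $\calL\varphi_j = \lambda_j\varphi_j$, one sees that each coefficient satisfies, in the distributional sense on $(0,\infty)$,
$$-(y^\alpha \oue_k'(y))' + \lambda_k y^\alpha \oue_k(y) = 0, \qquad -\lim_{y\to 0^+} y^\alpha \oue_k'(y) = d_s \ozsf_k,$$
together with the decay $\oue_k(y)\to 0$ as $y\to\infty$. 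The substitution $z=\sqrt{\lambda_k}y$ turns this into the modified Bessel equation of order $s$, whose unique admissible solution is, up to a constant depending only on $s$, given by $\oue_k(y) = c_s \ozsf_k \lambda_k^{-s} (\sqrt{\lambda_k}y)^s K_s(\sqrt{\lambda_k}y)$, where $K_s$ denotes the modified Bessel function of the second kind.

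Next, using the $\calL$-orthogonality of the $\varphi_k$, one reduces the weighted gradient norm to
$$\|\nabla\oue\|_{L^2(y^\alpha,\Omega\times(\Y,\infty))}^2 \lesssim \sum_{k\geq 1}\int_{\Y}^{\infty} y^\alpha\bigl(|\oue_k'(y)|^2 + \lambda_k |\oue_k(y)|^2\bigr)\diff y.$$
Invoking the recurrence $\frac{d}{dy}(y^s K_s(\sqrt{\lambda_k}y)) = -\sqrt{\lambda_k}\,y^s K_{s-1}(\sqrt{\lambda_k}y)$ expresses $\oue_k'$ in terms of another Bessel function of the same family, and the asymptotic $K_\sigma(z)\sim\sqrt{\pi/(2z)}\,e^{-z}$ for $z\gtrsim 1$ together with $\lambda_k\geq\lambda_1$ yields, mode by mode, a bound of the form $\int_{\Y}^{\infty} y^\alpha(\cdots)\diff y \lesssim \lambda_k^{-s}|\ozsf_k|^2\, e^{-\sqrt{\lambda_k}\Y}$. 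Splitting $e^{-\sqrt{\lambda_k}\Y}=e^{-\sqrt{\lambda_1}\Y}e^{-(\sqrt{\lambda_k}-\sqrt{\lambda_1})\Y}$ and summing against the Parseval-type identity $\|\ozsf\|_{\Hsd}^2=\sum_k \lambda_k^{-s}|\ozsf_k|^2$ would then produce \eqref{eq:exponential_decay}.

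The main technical obstacle is the uniform treatment of the Bessel-function prefactors across all frequencies. For $\sqrt{\lambda_k}\Y$ large the estimate above is immediate, but for moderate values one has to use a different expansion of $K_s$ near the origin, and only by conceding a factor of $e^{-\sqrt{\lambda_1}\Y/2}$ can the polynomial prefactors in $\sqrt{\lambda_k}\Y$ be absorbed uniformly in $k$; this is the reason for both the hypothesis $\Y\geq 1$ and for the halved rate in the exponent of the final bound. This weighted sum argument is essentially the one developed in \cite{NOS} for the pure extension; here the only additional observation needed is that the source term $\ozsf$ of the optimal state belongs to $L^2(\Omega)\hookrightarrow\Hsd$, so the right-hand side is controlled in the natural dual norm.
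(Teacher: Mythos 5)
Your argument is correct and coincides with the paper's proof, which simply defers to \cite[Proposition 3.1]{NOS}: that result is established there by exactly this spectral decomposition into one-dimensional Bessel-type problems, the representation $\oue_k(y)\propto \ozsf_k\lambda_k^{-s}(\sqrt{\lambda_k}y)^sK_s(\sqrt{\lambda_k}y)$, and the uniform tail estimates for $K_s$ summed against $\|\ozsf\|_{\Hsd}^2=\sum_k\lambda_k^{-s}|\ozsf_k|^2$. The only cosmetic remark is that the halved exponent is most cleanly seen as the square root of the squared-norm bound $e^{-\sqrt{\lambda_1}\Y}\|\ozsf\|_{\Hsd}^2$, rather than as a concession made when absorbing the polynomial prefactors.
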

\begin{proof}
See \cite[Proposition 3.1]{NOS}.
\end{proof}

This motivates the \emph{truncated optimal control problem}: Find
$
\min\{ J(\tr v,\rsf):v \in \HL(y^{\alpha},\C_{\Y}), \rsf \in \Zad \}
$
subject to the \emph{truncated state equation}: 
\begin{equation}
\label{eq:state_equation_truncated}
v \in \HL(y^{\alpha},\C_{\Y}): \quad  a_\Y(v,\phi) = ( \rsf, \tr \phi )_{L^2(\Omega)} \quad \forall \phi \in \HL(y^{\alpha},\C_\Y),
\end{equation}
where
\[
  \HL(y^{\alpha},\C_\Y) = \left\{ w \in H^1(y^\alpha,\C_\Y): w = 0 \text{ on }
    \partial_L \C_\Y \cup \Omega \times \{ \Y\} \right\},
\]
and for all $w,\phi \in \HL(y^{\alpha},\C_\Y)$, the bilinear form $a_{\Y}$ is defined by
\begin{equation}
a_\Y(w,\phi) = \frac{1}{d_s} \int_{\C_\Y} {y^{\alpha}\mathbf{A}(x',y)} \nabla w \cdot \nabla \phi + y^{\alpha} c(x') w \phi \diff x.
\label{eq:a_Y}
\end{equation}

To formulate optimality conditions we introduce the \emph{truncated adjoint problem}: 
\begin{equation}
\label{eq:adjoint_equation_truncated}
p \in \HL(y^{\alpha},\C_{\Y}): \quad a_\Y(\phi,p)  =  ( \tr v - \usfd , \tr \phi )_{L^2(\Omega)} \quad 
\forall \phi \in \HL(y^{\alpha},\C_{\Y}).
\end{equation}
With this adjoint problem at hand, we present necessary and sufficient optimality conditions for the truncated optimal control problem: the pair $(\bar v,\bar \rsf) \in \HL(y^{\alpha},\C_{\Y}) \times \Zad$ is optimal if and only if $\bar v = \bar v(\orsf)$ solves \eqref{eq:state_equation_truncated} and
\begin{equation}
\label{op_truncated}
  (\tr \bar{p} + \sigma \orsf + \nu \bar t, \rsf - \orsf )_{L^2(\Omega)} \geq 0 \quad \forall \rsf \in \Zad,
\end{equation}
where $\bar p = \bar p(\orsf) \in \HL(y^{\alpha},\C_{\Y})$ solves \eqref{eq:adjoint_equation_truncated} and $\bar t \in \partial \psi(\bar \rsf)$ .

We now introduce the following auxiliary problem: 
\begin{equation}
\label{eq:R}
\mathscr{R} \in \HL(y^{\alpha},\C): \quad a(\mathscr{R}, \phi) = (  \tr \bar{v} - \usfd, \tr \phi )_{L^2(\Omega)} \quad \forall \phi \in \HL(y^{\alpha},\C).
\end{equation}

The next result follows from \cite[Lemma 4.6]{MR3429730} and shows how $(\bar{v}(\orsf),\orsf)$ approximates $(\bar{\ue}(\ozsf),\ozsf)$.

\begin{theorem}[exponential convergence]
\label{thm:exp_convergence}
If  $(\bar{\ue}(\ozsf),\ozsf)$ and $(\bar{v}(\orsf),\orsf)$ are the optimal pairs for the extended and truncated optimal control problems, respectively, then
\begin{equation}
 \| \orsf - \ozsf \|_{L^2(\Omega)} \lesssim  e^{-\sqrt{\lambda_1} \Y/4} \left( \| \orsf \|_{L^2(\Omega)} + \| \usfd \|_{L^2(\Omega)} \right) ,
\label{eq:control_exp}
\end{equation}
and
\begin{equation}
\| \tr( \bar{\ue} - \bar{v}) \|_{L^2(\Omega)} 
\lesssim  e^{-\sqrt{\lambda_1} \Y/4} \left( \| \orsf \|_{L^2(\Omega)} + \| \usfd \|_{L^2(\Omega)} \right) .
\label{eq:state_exp}
\end{equation}
\end{theorem}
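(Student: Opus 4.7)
The plan is to imitate \cite[Lemma 4.6]{MR3429730}, but with the extra care required to accommodate the subgradient terms that appear because of the $L^1$--penalty. First, I would test the optimality condition \eqref{op_extended} with $\zsf=\orsf$ and \eqref{op_truncated} with $\rsf=\ozsf$, and add the two resulting inequalities. After rearrangement this yields
\[
\sigma \|\ozsf - \orsf\|_{L^2(\Omega)}^2 \leq (\tr \bar{\pe} - \tr\bar p, \orsf - \ozsf)_{L^2(\Omega)} + \nu(\bar\lambda - \bar t, \orsf - \ozsf)_{L^2(\Omega)}.
\]
Since $\bar\lambda \in \partial\psi(\ozsf)$ and $\bar t \in \partial\psi(\orsf)$, the monotonicity property \eqref{eq:subdiff_monotone} tells us that $(\bar\lambda - \bar t, \orsf - \ozsf)_{L^2(\Omega)} \leq 0$. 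Thus, after discarding the sparse term, the control error is controlled by
\[
\sigma \|\ozsf - \orsf\|_{L^2(\Omega)}^2 \leq (\tr(\bar{\pe} - \bar p),\, \orsf - \ozsf)_{L^2(\Omega)}.
\]

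Next, to bound the right--hand side I would split $\bar{\pe} - \bar p$ through the auxiliary extended adjoint $\mathscr{R}$ defined in \eqref{eq:R}, writing $\tr(\bar{\pe}-\bar p)=\tr(\bar{\pe}-\mathscr{R})+\tr(\mathscr{R}-\bar p)$. Because $\bar{\pe}-\mathscr{R}$ satisfies the extended adjoint problem \eqref{eq:adjoint_equation_extended} with source $\tr\bar{\ue}-\tr\bar v$, well--posedness in $\HL(y^{\alpha},\C)$ together with the trace estimate \eqref{Trace_estimate} gives $\|\tr(\bar{\pe}-\mathscr{R})\|_{L^2(\Omega)} \lesssim \|\tr\bar{\ue} - \tr\bar v\|_{L^2(\Omega)}$. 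Introducing the auxiliary extended state $q \in \HL(y^{\alpha},\C)$ associated with the control $\orsf$ and writing $\tr\bar{\ue}-\tr\bar v = \tr(\bar{\ue}-q) + \tr(q-\bar v)$, continuous dependence of the state on the control bounds the first piece by $\|\ozsf - \orsf\|_{L^2(\Omega)}$, while the second piece is precisely a truncation error. For the piece $\mathscr{R}-\bar p$ the same observation applies to the adjoint: it is the truncation error for problem \eqref{eq:R} with fixed source $\tr\bar v-\usfd$. Both truncation errors are controlled via Proposition \ref{pro:exponential_decay} (applied to the appropriate extended problems) together with the trace estimate \eqref{Trace_estimate}, producing factors of the form $e^{-\sqrt{\lambda_1}\Y/4}(\|\orsf\|_{L^2(\Omega)}+\|\usfd\|_{L^2(\Omega)})$.

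Combining, the right--hand side is dominated by a constant times
\[
\bigl(\|\ozsf - \orsf\|_{L^2(\Omega)} + e^{-\sqrt{\lambda_1}\Y/4}(\|\orsf\|_{L^2(\Omega)}+\|\usfd\|_{L^2(\Omega)})\bigr)\,\|\orsf - \ozsf\|_{L^2(\Omega)}.
\]
A Young's inequality absorbs the quadratic term into the left--hand side, which after taking square roots yields \eqref{eq:control_exp}. The state estimate \eqref{eq:state_exp} then follows by writing $\tr\bar{\ue}-\tr\bar v = \tr(\bar{\ue}-q)+\tr(q-\bar v)$: the first term is estimated by continuous dependence together with \eqref{eq:control_exp}, and the second by a direct application of Proposition \ref{pro:exponential_decay} plus trace.

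The main obstacle I anticipate is bookkeeping of the exponential rate. Proposition \ref{pro:exponential_decay} produces the factor $e^{-\sqrt{\lambda_1}\Y/2}$ in a gradient norm, whereas the statement asks for $e^{-\sqrt{\lambda_1}\Y/4}$ in a trace norm; reconciling the two (as well as the fact that $\mathscr{R}$ and $\bar p$ live on different domains, requiring extension--by--zero arguments for the test functions) is where the loss of exactly one half in the exponent arises, and this must be tracked consistently in each of the three auxiliary problems above. The sparse penalty itself introduces no additional difficulty beyond the initial use of \eqref{eq:subdiff_monotone}.
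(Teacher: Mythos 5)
Your first two steps (swapping controls in the two variational inequalities, adding, and discarding the subgradient difference via the monotonicity \eqref{eq:subdiff_monotone}) coincide exactly with the paper's argument. The gap is in how you treat $(\tr(\bar{\pe}-\bar p),\orsf-\ozsf)_{L^2(\Omega)}$. You split directly through $\mathscr{R}$, bound $\|\tr(\bar{\pe}-\mathscr{R})\|_{L^2(\Omega)}\lesssim\|\tr(\bar\ue-\bar v)\|_{L^2(\Omega)}\lesssim \|\ozsf-\orsf\|_{L^2(\Omega)}+e^{-\sqrt{\lambda_1}\Y/4}(\cdots)$, and then claim that Young's inequality absorbs the resulting term into the left-hand side. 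But the offending term is $C\,\|\ozsf-\orsf\|_{L^2(\Omega)}\cdot\|\orsf-\ozsf\|_{L^2(\Omega)}=C\,\|\ozsf-\orsf\|_{L^2(\Omega)}^2$, i.e.\ it is already quadratic: Young's inequality cannot shrink its coefficient, and absorption into $\sigma\|\ozsf-\orsf\|_{L^2(\Omega)}^2$ requires $C<\sigma$. The constant $C$ is a product of trace and well-posedness constants for the state and adjoint problems and bears no relation to the (arbitrary) positive parameter $\sigma$, so the argument fails for general data.

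The missing idea is the intermediate adjoint $\pe(\orsf)$, the solution of the \emph{extended} adjoint problem \eqref{eq:adjoint_equation_extended} with source $\tr\ue(\orsf)-\usfd$. Writing $\bar{\pe}-\bar p=(\bar{\pe}-\pe(\orsf))+(\pe(\orsf)-\bar p)$, the first contribution is handled by a sign argument rather than a norm bound: testing the equation for $\bar\ue-\ue(\orsf)$ with $\bar{\pe}-\pe(\orsf)$ and the equation for $\bar{\pe}-\pe(\orsf)$ with $\ue(\orsf)-\bar\ue$ and using the symmetry of $a(\cdot,\cdot)$ gives $(\tr(\bar{\pe}-\pe(\orsf)),\orsf-\ozsf)_{L^2(\Omega)}=-\|\tr(\bar\ue-\ue(\orsf))\|_{L^2(\Omega)}^2\le 0$, so this term is simply discarded. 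This duality step is unavailable in your decomposition because $\bar{\pe}-\mathscr{R}$ has source $\tr(\bar\ue-\bar v)$, which does not match the state difference $\bar\ue-\ue(\orsf)$ generated by the control difference $\ozsf-\orsf$. Only the remaining piece $\pe(\orsf)-\bar p$, a pure truncation error, is then split through $\mathscr{R}$ and estimated exponentially exactly as you describe; that part of your sketch, as well as your derivation of \eqref{eq:state_exp} and your accounting of the loss from $e^{-\sqrt{\lambda_1}\Y/2}$ to $e^{-\sqrt{\lambda_1}\Y/4}$, is consistent with the paper.
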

\begin{proof}
Set $\zsf = \orsf$ and  $\rsf = \ozsf$ in \eqref{op_extended} and \eqref{op_truncated}, respectively. Adding the obtained inequalities we arrive at the estimate
\[
\sigma \| \ozsf - \orsf  \| _{L^2(\Omega)}^2 \leq ( \tr (\bar{\pe} -  \bar{p}) + \nu(\bar \lambda - \bar t), \orsf - \ozsf )_{L^2(\Omega)}.
\]

As a first step to control the right hand side of the previous expression, we recall that $\bar \lambda \in \partial \| \bar \zsf \|_{L^1(\Omega)}$ and $\bar t \in \partial \| \bar \rsf \|_{L^1(\Omega)}$ so that, by \eqref{eq:subdiff_monotone},
\[
  \nu(\bar \lambda - \bar t, \orsf - \ozsf )_{L^2(\Omega)}  \leq 0.
\]
Consequently,
\begin{equation}
\label{eq:basic_estimate}
\sigma \| \ozsf - \orsf  \| _{L^2(\Omega)}^2 \leq ( \tr (\bar{\pe} -  \bar{p}), \orsf - \ozsf )_{L^2(\Omega)}.
\end{equation}

To control the right hand side of the previous expression, we add and subtract the adjoint state $\pe(\orsf)$ as follows:
\begin{equation*}
\sigma \| \ozsf - \orsf  \| _{L^2(\Omega)}^2 \leq ( \tr (\bar{\pe} - \pe(\orsf)), \orsf - \ozsf )_{L^2(\Omega)} + 
( \tr (\pe(\orsf) - \bar{p}), \orsf - \ozsf )_{L^2(\Omega)} = \textrm{I} + \textrm{II}.
\end{equation*}
Let us now bound $\textrm{I}$. Notice that $\bar{\pe} - \pe(\orsf) \in \HL(y^{\alpha},\C)$ solves
\begin{equation*}
a(\phi_{\pe},\bar{\pe} - \pe(\orsf))  =  ( \tr (\bar \ue - \ue(\orsf)) , \tr \phi_{\pe} )_{L^2(\Omega)} \quad 
\forall \phi_{\pe} \in \HL(y^{\alpha},\C).
\end{equation*}
On the other hand, we also observe that $\bar \ue - \ue(\orsf) \in  \HL(y^{\alpha},\C)$ solves
\begin{equation*}
a(\bar \ue -\ue(\orsf),\phi_{\ue})  = ( \ozsf - \orsf, \tr \phi_{\ue} )_{L^2(\Omega)} \qquad \forall \phi_{\ue} \in \HL(y^{\alpha},\C).
\end{equation*}
Setting $\phi_{\ue} = \bar{\pe} - \pe(\orsf)$ and $\phi_{\pe} =  \ue(\orsf) - \bar \ue$ we immediately  conclude that $\textrm{I} \leq 0$.

To control the term $\textrm{II}$  we write $\bar \pe(\orsf) -\bar{p}= (\bar \pe(\orsf) - \mathscr{R}) + (\mathscr{R} - \bar{p} )$, where $\mathscr{R}$ solves \eqref{eq:R}. The first term is controlled in view of the trace estimate \eqref{Trace_estimate}, the well--posedness of problem \eqref{eq:R} and an application of the estimate \cite[Theorem 3.5]{NOS}:
\begin{equation*}
\| \tr (\pe(\bar{\rsf}) - \mathscr{R} ) \|_{L^2(\Omega)} \lesssim \| \tr (\ue(\bar{\rsf}) - \bar v(\orsf) ) \|_{L^2(\Omega)} \lesssim e^{-\sqrt{\lambda_1} \Y/4} \| \orsf \|_{L^2(\Omega)}.
\end{equation*}
Similar arguments yield: $
\| \tr( \mathscr{R} - \bar p ) \|_{L^2(\Omega)} \lesssim e^{-\sqrt{\lambda_1} \Y/4} ( \| \orsf \|_{L^2(\Omega)} + \| \usfd \|_{L^2(\Omega)} ) . 
$
In view of \eqref{eq:basic_estimate}, a collection of these estimates allow us to obtain \eqref{eq:control_exp}.

The estimate \eqref{eq:state_exp} follows from similar arguments upon writing $
  \bar{\ue}- \bar{v}(\orsf) = \left( \bar{\ue}(\ozsf) - \ue(\orsf) \right) + \left( \ue(\orsf) - \bar{v}(\orsf) \right).
$
This concludes the proof.
\end{proof}

We now state projection formulas and regularity results for the optimal variables $\orsf$ and $\bar t$, together with a sparsity property for $\bar \rsf$.

\begin{corollary}[projection formulas]
Let the variables $\bar \rsf$, $\bar v$, $\bar p$ and $\bar t$ be as in the variational inequality \eqref{op_truncated}. Then, we have that 
\begin{align} 
  \bar \rsf(x') & = \Proj_{[\asf,\bsf]}\left( -\frac{1}{\sigma} \left( \tr \bar p(x') + \nu \bar t(x') \right) \right),
  \label{eq:projection_r}
  \\
  \bar \rsf(x') &= 0 \quad \Leftrightarrow \quad |\tr \bar p (x')| \leq \nu,
  \label{eq:sparsity_r}
  \\
  \bar t(x') & = \Proj_{[-1,1]}\left( -\frac{1}{\nu}  \tr \bar p(x')  \right).
  \label{eq:projection_t}
\end{align}
\end{corollary}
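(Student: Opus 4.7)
The plan is to mirror the derivation carried out in the fractional setting (the corollary following Theorem~\ref{th:optimality_condition_fractional}). The variational inequality \eqref{op_truncated} has the same algebraic structure as \eqref{eq:optimality_condition_fractional}, with $\bar \psf$ replaced by $\tr \bar p$, $\bar \zsf$ by $\bar \rsf$, and $\bar \lambda$ by $\bar t$; since these substitutions preserve the pointwise, box--constrained nature of the argument, the three formulas follow by the same calculations as in \cite[Corollary 3.2]{CHW:12}.

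First, I would localize \eqref{op_truncated}. Testing with $\rsf = \bar \rsf + \chi_E(w-\bar\rsf)$ for arbitrary measurable $E\subset\Omega$ and $w\in\Zad$, and then shrinking $E$ around Lebesgue points, yields the pointwise inequality
\[
(\tr \bar p(x') + \sigma \bar \rsf(x') + \nu \bar t(x'))(\rsf - \bar \rsf(x')) \geq 0 \quad \forall \rsf\in[\asf,\bsf],\ \text{a.e. } x'\in\Omega.
\]
Together with $\bar\rsf(x')\in[\asf,\bsf]$, this is precisely the characterization of $\bar\rsf(x')$ as the projection onto $[\asf,\bsf]$ of the point $-\sigma^{-1}(\tr \bar p(x')+\nu\bar t(x'))$, establishing \eqref{eq:projection_r}.

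Next, to prove \eqref{eq:projection_t} I would perform a case analysis on the sign of $\bar\rsf(x')$, invoking the characterization \eqref{eq:subdif_L1} of $\bar t\in\partial\psi(\bar\rsf)$. If $\bar\rsf(x')>0$, then $\bar t(x')=1$ and \eqref{eq:projection_r} forces $\tr\bar p(x')<-\nu$, hence $-\nu^{-1}\tr\bar p(x')>1$ and its projection onto $[-1,1]$ equals $1=\bar t(x')$; the case $\bar\rsf(x')<0$ is symmetric. If $\bar\rsf(x')=0$, the hypothesis $\asf<0<\bsf$ combined with \eqref{eq:projection_r} forces $\tr\bar p(x')+\nu\bar t(x')=0$, so $\bar t(x')=-\nu^{-1}\tr\bar p(x')\in[-1,1]$ and the projection is the identity. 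Formula \eqref{eq:sparsity_r} is then an immediate byproduct of this analysis: $\bar\rsf=0$ holds exactly on the set where $-\nu^{-1}\tr\bar p$ lies in $[-1,1]$, i.e.~where $|\tr\bar p|\leq\nu$; conversely, on such a set \eqref{eq:projection_r} gives $\bar\rsf=\Proj_{[\asf,\bsf]}(0)=0$.

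The only mildly delicate point is the localization in the first step (passing from the integral variational inequality to its pointwise a.e.\ counterpart), which is by now a routine measure--theoretic argument and is in any case identical to the one implicit in the proof of the fractional case. Beyond that there is no genuine obstacle, which is precisely why the authors defer to \cite[Corollary 3.2]{CHW:12} for the fractional analogue.
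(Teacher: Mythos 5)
Your argument is correct and is exactly the standard localization--plus--case--analysis that the cited reference \cite[Corollary 3.2]{CHW:12} carries out; the paper's own proof consists of nothing more than that citation, so you are on the same route. One cosmetic point: your converse direction for \eqref{eq:sparsity_r} is cleaner stated as the contrapositive of the strict inequalities $\tr \bar p(x') < -\nu$ (resp. $\tr \bar p(x') > \nu$) derived in the cases $\bar\rsf(x')>0$ (resp. $\bar\rsf(x')<0$), rather than as ``$\bar\rsf=\Proj_{[\asf,\bsf]}(0)=0$,'' which presupposes $\tr\bar p(x')+\nu\bar t(x')=0$ and hence the very conclusion of the zero case.
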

\begin{proof}
See \cite[Corollary 3.2]{CHW:12}. 
\end{proof}

\begin{proposition}[regularity results for $\bar \rsf$ and $\bar t$]
If $\usfd \in \mathbb{H}^{1-s}(\Omega)$, then the truncated optimal control $\bar \rsf \in H_0^1(\Omega)$. In addition, the subgradient $\bar t$, given by \eqref{eq:projection_t}, satisfies that $\bar t \in  H_0^1(\Omega)$.
\label{pro:regularity_r}
\end{proposition}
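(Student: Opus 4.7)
The plan is to mirror the bootstrap argument of Theorem \ref{th:regularity_z_lambda}, working with the truncated projection formulas \eqref{eq:projection_r} and \eqref{eq:projection_t} in place of \eqref{eq:projection_z} and \eqref{eq:projection_lambda}. First, I would establish a base regularity shift for the truncated problem: from $\bar \rsf \in \Zad \subset L^2(\Omega)$ and the convexity of $\Omega$, together with regularity results for the weighted nonuniformly elliptic equation on $\C_\Y$ in the spirit of those in \cite{NOS}, deduce that $\tr \bar v \in \mathbb{H}^{2s}(\Omega)$, and then, using the hypothesis $\usfd \in \Ws$, that $\tr \bar p \in \mathbb{H}^\kappa(\Omega)$ with $\kappa = \min\{4s,1+s,2\}$. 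Here $\tr \bar p$ plays the role that $\bar \psf$ did in Theorem \ref{th:regularity_z_lambda}.

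With this base regularity in hand, the argument proceeds by cases exactly as before. For $s \in [\tfrac14,1)$ we have $\kappa \geq 1$, so $\tr \bar p \in H^1(\Omega)$; since $\bar p$ vanishes on the lateral boundary $\partial_L \C_\Y$, its trace lies in $H_0^1(\Omega)$, and \cite[Theorem A.1]{KSbook} applied to \eqref{eq:projection_t} yields $\bar t \in H_0^1(\Omega)$. A second application to \eqref{eq:projection_r}, using also the condition $\asf < 0 < \bsf$ so that the outer projection preserves boundary values, gives $\bar \rsf \in H_0^1(\Omega)$.

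For $s \in (0,\tfrac14)$, I would iterate. Starting from $\tr \bar p \in \mathbb{H}^{4s}(\Omega)$, a nonlinear operator interpolation result (\cite[Lemma 28.1]{Tartar}, applied as in \cite[Lemma 3.5]{MR3429730} to the Lipschitz pointwise projections) propagates this regularity through \eqref{eq:projection_t} and \eqref{eq:projection_r} to obtain $\bar t, \bar \rsf \in \mathbb{H}^{4s}(\Omega)$. Feeding this improvement back into the truncated state and adjoint equations yields $\tr \bar v \in \mathbb{H}^{6s}(\Omega)$ and $\tr \bar p \in \mathbb{H}^{\min\{8s,1+s\}}(\Omega)$. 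Each pass gains $2s$ derivatives, so after finitely many steps the threshold $1$ is crossed and we reach $\tr \bar p \in H_0^1(\Omega)$, whence $\bar t, \bar \rsf \in H_0^1(\Omega)$.

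The main technical obstacle will be justifying the regularity shift for the truncated state and adjoint operators, \emph{i.e.}, that the usual fractional shift $\rsf \in \mathbb{H}^\delta(\Omega) \Rightarrow \tr \bar v \in \mathbb{H}^{\delta+2s}(\Omega)$, and analogously for the adjoint, holds uniformly in $\Y$ for \eqref{eq:state_equation_truncated} and \eqref{eq:adjoint_equation_truncated}. This most naturally proceeds by an eigenfunction expansion $v(x',y) = \sum_k v_k(y)\varphi_k(x')$ that reduces everything to one-dimensional problems in $y$; the explicit solutions then show that the artificial Dirichlet condition at $y=\Y$ introduces only exponentially small corrections relative to the Caffarelli--Silvestre extension, so that the $x'$--regularity at $y=0$ is inherited from the non-truncated setting. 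Once these shifts are established, the bootstrap becomes a mechanical replay of Theorem \ref{th:regularity_z_lambda}.
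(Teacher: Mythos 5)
Your proposal is correct and follows essentially the same route as the paper: the paper's proof is precisely an adaptation of the bootstrapping argument of Theorem \ref{th:regularity_z_lambda} combined with the regularity theory for the truncated extension problem from \cite[Proposition 4.1]{Otarola_controlp1}, which is exactly the regularity shift you identify as the main technical obstacle and resolve via the eigenfunction expansion in $y$. The only slip is cosmetic: each full pass of the bootstrap improves the regularity of $\tr\bar p$ by $4s$ (from $\mathbb{H}^{4s}(\Omega)$ to $\mathbb{H}^{\min\{8s,1+s\}}(\Omega)$), not $2s$, but this does not affect the conclusion that finitely many iterations suffice.
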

\begin{proof}
The proof is an adaption of the techniques elaborated in the proof of \cite[Proposition 4.1]{Otarola_controlp1} and the bootstrapping argument of Theorem \ref{th:regularity_z_lambda}.
\end{proof}

We conclude this section with regularity results for the traces of the optimal state and adjoint state.

\begin{corollary}[regularity results for $\tr \bar v$ and $\tr \bar p$]
  If $\usfd \in \Ws$, then $\tr \bar v \in \mathbb{H}^{l}(\Omega)$, where $l = \min \{ 1+ 2s,2 \}$ and $\tr \bar p \in \mathbb{H}^\varpi(\Omega)$, where $\varpi = \min \{ 1+s, 2\}$.
\label{pro:regularity_v}
\end{corollary}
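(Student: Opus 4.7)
The plan is to mirror the argument used for the fractional case (the corollary immediately following Theorem \ref{th:regularity_z_lambda}), with $\bar \zsf$ replaced by $\bar \rsf$ and the fractional state equation replaced by the truncated extended state equation. The starting point is Proposition \ref{pro:regularity_r}, which provides $\bar \rsf \in H_0^1(\Omega)$; in particular, since $s \in (0,1)$, we have $H_0^1(\Omega) \hookrightarrow \mathbb{H}^{1-s}(\Omega)$.

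First, to treat $\tr \bar v$, I would exploit that the truncated problem \eqref{eq:state_equation_truncated} is, up to exponentially small corrections in $\Y$, the Caffarelli--Silvestre extension associated with $\calLs \tr \ue(\bar \rsf) = \bar \rsf$. Decompose $\bar v = \ue(\bar \rsf)|_{\C_\Y} - w$, where the corrector $w \in \HL(y^{\alpha},\C_\Y)$ fixes the artificial boundary condition at $y=\Y$; by Proposition \ref{pro:exponential_decay}, $w$ is controlled in energy norm by $e^{-\sqrt{\lambda_1}\Y/2}\|\bar \rsf\|_{\Hsd}$. Because the degenerate elliptic equation satisfied by $w$ has zero source inside $\C_\Y$ and trivial Dirichlet data on $\partial_L \C_\Y$, its trace on $\Omega \times \{0\}$ is as smooth as that of $\tr \ue(\bar \rsf)$. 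Invoking that $\calLs$ is a pseudodifferential operator of order $2s$ on the convex polytope $\Omega$, the identity $\calLs \tr \ue(\bar \rsf) = \bar \rsf$ combined with $\bar \rsf \in H_0^1(\Omega)$ yields $\tr \ue(\bar \rsf) \in \mathbb{H}^{\min\{1+2s,2\}}(\Omega) = \mathbb{H}^l(\Omega)$, and the decomposition transfers this regularity to $\tr \bar v$.

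For $\tr \bar p$, I would apply the same strategy to the truncated adjoint equation \eqref{eq:adjoint_equation_truncated}, whose right--hand side is $\tr \bar v - \usfd$. By the previous step $\tr \bar v \in \mathbb{H}^l(\Omega)$, and since $l \geq 1 \geq 1-s$, the embedding $\mathbb{H}^l(\Omega) \hookrightarrow \mathbb{H}^{1-s}(\Omega)$ together with the hypothesis $\usfd \in \mathbb{H}^{1-s}(\Omega)$ gives $\tr \bar v - \usfd \in \mathbb{H}^{1-s}(\Omega)$. Applying the $2s$--order regularity lift for $\calLs$ on the convex polytope $\Omega$ yields $\tr \bar p \in \mathbb{H}^{\min\{(1-s)+2s,2\}}(\Omega) = \mathbb{H}^\varpi(\Omega)$, as required.

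The main obstacle lies in rigorously justifying that the corrector $w$ does not spoil the regularity of $\tr \bar v$, since $\bar v$ does not itself solve a clean fractional equation on $\Omega$. This is the same kind of smoothing--perturbation reasoning underlying Proposition \ref{pro:regularity_r} and is handled by the techniques of \cite[Proposition 4.1]{Otarola_controlp1}. Once the smoothing character of $w$ is in place, the regularity claimed for both $\tr \bar v$ and $\tr \bar p$ reduces to the pseudodifferential theory for $\calLs$ on convex polytopes already invoked above.
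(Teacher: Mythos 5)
Your proposal is correct and takes essentially the route the paper intends: the paper states this corollary without a displayed proof, as the truncated analogue of the corollary following Theorem~\ref{th:regularity_z_lambda}, obtained by combining Proposition~\ref{pro:regularity_r} (which gives $\bar \rsf \in H_0^1(\Omega) \hookrightarrow \Ws$) with the $2s$--order elliptic lift for $\calLs$ on the convex domain and a transference of regularity from the extended to the truncated problem via the techniques of \cite[Proposition 4.1]{Otarola_controlp1}. The one point to sharpen is your justification that the corrector $w = \ue(\bar\rsf)|_{\C_\Y} - \bar v$ is harmless: zero interior source, zero lateral data, and energy-norm smallness do not by themselves control the smoothness of $\tr w$ --- the decisive fact is that $w$ carries \emph{homogeneous conormal data} at $y=0$ (both functions share the Neumann datum $d_s\bar\rsf$), so a separation-of-variables computation shows the Fourier coefficients of $\tr w$ decay like $e^{-\sqrt{\lambda_k}\,\Y}$ and hence $\tr w$ belongs to every $\mathbb{H}^{\delta}(\Omega)$; note also that this $w$ is not an element of $\HL(y^{\alpha},\C_\Y)$ since it is nonzero on $\Omega\times\{\Y\}$.
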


\section{Approximation of the fractional control problem}
\label{sec:approximation}
In this section we design and analyze a numerical technique to approximate the solution of the optimal control problem \eqref{eq:min}--\eqref{eq:cc}. In order to make this contribution self--contained, we briefly review the finite element method proposed and developed for the state equation \eqref{eq:state_equation} in \cite{NOS}.

\subsection{A finite element method for the state equation}
\label{sec:approximation_NOS}
We follow \cite[Section 4]{NOS} and let $\T_{\Omega} = \{K\} $ be a conforming triangulation of $\Omega$ into cells $K$ (simplices or $n$--rectangles). We denote by $\Tr_{\Omega}$ the collection of all conforming refinements of an original mesh $\T_0$, and assume that the family $\Tr_{\Omega}$ is shape regular \cite{CiarletBook,Guermond-Ern}. If $\T_{\Omega} \in \Tr_{\Omega}$, we define $h_{\T_{\Omega}} = \max_{K\in \T_{\Omega}} h_K$. We construct a mesh $\T_{\Y}$ over $\C_\Y$ as the tensor product triangulation of $\T_{\Omega} \in \Tr_{\Omega}$ and $\I_{\Y}$, where the latter corresponds to a partition of the interval $[0,\Y]$ with mesh points:
\begin{equation}
y_k = \left( \frac{k}{M} \right)^{ \gamma}\Y, \quad k = 0,\cdots,M,
\label{eq:graded_mesh}
\end{equation}
with $\gamma = 3/(1-\alpha) = 3/(2s) > 1$. We notice that each discretization of the truncated cylinder $\C_\Y$ depends on the truncation parameter $\Y$. We denote by $\Tr$ the set of all such anisotropic triangulations $\T_\Y$. The following weak shape regularity condition is valid: there is a constant $\mu$ such that, for all $\T_\Y \in \Tr$, if $T_1 = K_1 \times I_1, T_2 = K_2 \times I_2 \in \T_\Y$ have nonempty intersection, then $h_{I_1} /h_{I_2} \leq \mu$, where $ h_I = | I |$ \cite{DL:05,NOS}. The main motivation for considering elements as in \eqref{eq:graded_mesh} is to compensate the rather singular behavior of $\ue$, solution to problem \eqref{eq:state_equation_extended}. We refer the reader to \cite{NOS} for details.

For $\T_{\Y} \in \Tr$, we define the finite element space 
\begin{equation}\
\label{eq:FESpace}
  \V(\T_\Y) = \left\{
            W \in C^0( \bar{\C}_\Y): W|_T \in \mathcal{P}_1(K) \otimes \mathbb{P}_1(I), \ \forall T \in \T_\Y, \
            W|_{\Gamma_D} = 0
          \right\},
\end{equation}
where $\Gamma_D = \partial_L \C_{\Y} \cup \Omega \times \{ \Y\}$ is the Dirichlet boundary. When the base $K$ of an element $T = K \times I$ is a simplex, the set $\mathcal{P}_1(K)$ is $\mathbb{P}_1(K)$. 
If $K$ is a cube, $\mathcal{P}_1(K)$ stands for $\mathbb{Q}_1(K)$. 
We also define $\U(\T)=\tr \V(\T_{\Y})$, \ie a $\mathcal{P}_1$ finite element space over the mesh $\T_{\Omega}$. Finally, we assume that every $\T_\Y \in \Tr$ is such that, $M \approx \# \T_{\Omega}^{1/n}$ so that, since $\#\T_{\Y} = M \, \# \T_{\Omega}$, we have $\#\T_\Y \approx M^{n+1}$.

The Galerkin approximation of \eqref{eq:state_equation_truncated} is defined as follows:
\begin{equation}
\label{eq:discrete_state}
 V \in \V(\T_{\Y}): \quad  a_{\Y}(V,W) = (\rsf, \textrm{tr}_{\Omega} W )_{L^2(\Omega)}
  \quad \forall W \in \V(\T_{\Y}),
\end{equation}
where $a_{\Y}$ is defined in \eqref{eq:a_Y}.  We present \cite[Theorem 5.4]{NOS} and \cite[Corollary 7.11]{NOS}.

\begin{theorem}[error estimates]
\label{TH:fl_error_estimates}
If $\ue(\rsf) \in \HL(y^{\alpha},\C)$ solves \eqref{eq:state_equation_extended} with $\zsf$ replaced by $\rsf \in \Ws$, then
\begin{equation}
\label{eq:fl_optimal_rate}
  \| \nabla( \ue(\rsf) - V) \|_{L^2(y^\alpha,\C)} \lesssim
|\log(\# \T_{\Y})|^s(\# \T_{\Y})^{-1/(n+1)} \|\rsf \|_{\mathbb{H}^{1-s}(\Omega)},
\end{equation}
provided $\Y \approx | \log(\# \T_{\Y}) |$. Alternatively, if $\usf(\rsf)$ denotes the solution to \eqref{eq:state_equation} with $\rsf$ as a forcing term, then
\begin{equation}
\label{eq:fl_rate}
\| \usf(\rsf) - \tr V \|_{\Hs} \lesssim
|\log(\# \T_{\Y})|^s(\# \T_{\Y})^{-1/(n+1)} \|\rsf \|_{\mathbb{H}^{1-s}(\Omega)}.
\end{equation}
\end{theorem}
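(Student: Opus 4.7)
The proof proposal for Theorem~\ref{TH:fl_error_estimates} relies on three ingredients: the exponential decay already recorded in Proposition~\ref{pro:exponential_decay}, suitable regularity of the extended solution $\ue(\rsf)$ in weighted Sobolev norms, and anisotropic quasi-interpolation estimates on the graded mesh prescribed by \eqref{eq:graded_mesh}. The strategy is to split the error as
\[
\ue(\rsf) - V = \bigl(\ue(\rsf) - v(\rsf)\bigr) + \bigl(v(\rsf) - V\bigr),
\]
where $v(\rsf)\in\HL(y^\alpha,\C_\Y)$ solves the truncated problem \eqref{eq:state_equation_truncated}. The first summand is controlled by Proposition~\ref{pro:exponential_decay}, giving an exponential bound $e^{-\sqrt{\lambda_1}\Y/2}\|\rsf\|_{\Hsd}$. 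For the second summand Galerkin orthogonality in the bilinear form $a_\Y$ of \eqref{eq:a_Y}, together with its coercivity on $\HL(y^\alpha,\C_\Y)$ (a consequence of \eqref{Poincare_ineq}), yields the C\'ea-type bound
\[
\|\nabla\bigl(v(\rsf)-V\bigr)\|_{L^2(y^\alpha,\C_\Y)} \lesssim \inf_{W\in\V(\T_\Y)} \|\nabla\bigl(v(\rsf)-W\bigr)\|_{L^2(y^\alpha,\C_\Y)}.
\]

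The core of the proof is to quantify the right-hand side by a carefully tailored interpolant $\Pi_{\T_\Y} v(\rsf)$. The natural choice is a Cl\'ement/Scott--Zhang type operator adapted to the weight $y^\alpha$ and to tensor-product anisotropic cells $T=K\times I$; stability and locality in weighted averages must be established. The next step is to import fine regularity of $v(\rsf)$: one expects
\[
\|\partial_{x'}^2 v(\rsf)\|_{L^2(y^\alpha,\C_\Y)} + \|\partial_y\partial_{x'}v(\rsf)\|_{L^2(y^\alpha,\C_\Y)} \lesssim \|\rsf\|_{\mathbb{H}^{1-s}(\Omega)},
\]
while $\partial_{yy}v(\rsf)$ is only in a weaker weighted space with an enhanced singularity at $y=0$, captured by the weight $y^{\beta}$ with $\beta=\alpha+2(1-\delta)$ for small $\delta>0$. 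This last bound is the heart of the argument: one derives it by expanding $v(\rsf)$ in the eigenbasis $\{\varphi_k\}$, computing the corresponding modes $\psi_k(y)$ explicitly (a Bessel-type function satisfying $-(y^\alpha\psi_k')' + \lambda_k y^\alpha\psi_k=0$), and summing using $\rsf\in\mathbb{H}^{1-s}(\Omega)$.

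Given these regularity estimates, the third step is the anisotropic interpolation estimate on a cell $T = K\times I$ with $I = (y_{k-1},y_k)$: one must show
\[
\|\nabla(v(\rsf) - \Pi_{\T_\Y}v(\rsf))\|_{L^2(y^\alpha,T)}^2 \lesssim h_K^2 \|\nabla_{x'}^2 v(\rsf)\|_{L^2(y^\alpha,S_T)}^2 + h_I^2 \|\partial_{yy}v(\rsf)\|_{L^2(y^\beta,S_T)}^2,
\]
where $S_T$ is a patch of $T$ and the different weights on the two terms reflect the anisotropic regularity. The choice of mesh grading $\gamma = 3/(1-\alpha)$ in \eqref{eq:graded_mesh} is then exactly what makes the second sum collapse after summation over $k=1,\dots,M$, yielding a contribution $M^{-2}\lesssim(\#\T_\Y)^{-2/(n+1)}$. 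Summing in $x'$ contributes $h_{\T_\Omega}^2\approx (\#\T_\Omega)^{-2/n}\approx M^{-2}$ under the assumption $M\approx(\#\T_\Omega)^{1/n}$. The main obstacle, and the most technical piece, is precisely this anisotropic weighted interpolation bound together with the sharp regularity for $\partial_{yy}v(\rsf)$; it forces one to work with Muckenhoupt weights that differ between the $x'$ and $y$ components and to track the endpoint carefully so that the $(1-s)$-norm on $\rsf$ is not over-spent.

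Finally, balancing the exponential truncation error $e^{-\sqrt{\lambda_1}\Y/2}$ against the polynomial Galerkin error $M^{-1}$ forces the choice $\Y\approx|\log(\#\T_\Y)|$, producing the logarithmic factor $|\log(\#\T_\Y)|^s$ in \eqref{eq:fl_optimal_rate}; the extra $s$ power comes from bookkeeping of the decay constants in the regularity estimates as functions of $\alpha=1-2s$. The trace estimate \eqref{eq:fl_rate} then follows from \eqref{Trace_estimate}, namely $\|\tr w\|_{\Hs}\lesssim\|\nabla w\|_{L^2(y^\alpha,\C)}$ applied to $w = \ue(\rsf) - \widetilde{V}$ with $\widetilde{V}$ a harmonic extension of $V$, combined with the fact that $\tr\ue(\rsf)=\usf(\rsf)$ by the Caffarelli--Silvestre correspondence.
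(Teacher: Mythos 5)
The paper does not prove this theorem itself: it quotes it verbatim from \cite[Theorem 5.4]{NOS} and \cite[Corollary 7.11]{NOS}. Your outline faithfully reconstructs the strategy of that cited proof---truncation controlled by the exponential decay of Proposition~\ref{pro:exponential_decay}, a C\'ea-type bound on the truncated cylinder, anisotropic weighted interpolation on the graded mesh \eqref{eq:graded_mesh} matched to the second-order regularity $\nabla\nabla_{x'}\ue \in L^2(y^\alpha,\C)$ and $\partial_{yy}\ue \in L^2(y^\beta,\C)$ with $\beta>2\alpha+1$, the choice $\Y\approx|\log(\#\T_\Y)|$, and the trace estimate \eqref{Trace_estimate} for \eqref{eq:fl_rate}---so it is essentially the same approach.
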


\subsection{A fully discrete scheme for the fractional optimal control problem}
\label{sec:approximation_sparse}
In section \ref{sec:extended} we replaced the original fractional optimal control problem \eqref{eq:min}--\eqref{eq:cc} by an equivalent one that involves the local state equation \eqref{eq:state_equation_extended} and is posed on the semi--infinite cylinder $\C = \Omega \times (0,\infty)$. We then considered a truncated version of this, equivalent, control problem that is posed on the bounded cylinder $\C_{\Y} = \Omega \times (0,\Y)$ and showed that the error committed in the process is exponentially small. In light of these results, in this section we propose a fully discrete scheme to approximate the solution to \eqref{eq:min}--\eqref{eq:cc}: piecewise constant functions to approximate the control variable and, for the state variable, first--degree tensor product finite elements, as described in section \ref{sec:approximation_NOS}.

We begin by defining the set of discrete controls, and the discrete admissible set
\begin{align*}
  \mathbb{Z}(\T_\Omega) &= \left\{ Z \in L^{\infty}( \Omega ): Z|_K \in \mathbb{P}_0(K) \quad \forall K \in \T_\Omega \right\}, \\
  \mathbb{Z}_{ad}(\T_{\Omega}) &= \Zad \cap \mathbb{Z}(\T_\Omega),
\end{align*}
where $\Zad$ is defined in \eqref{eq:Zad}. Thus, the \emph{fully discrete optimal control problem} reads as follows: Find
$
 \min J(\tr V , Z) 
$
subject to the discrete state equation
\begin{equation}
\label{eq:state_equation_discrete}
 a_\Y(V,W) =  ( Z, \tr W )_{L^2(\Omega)} \quad \forall W \in \V(\T_{\Y}),
\end{equation}
and the discrete control constraints
$
Z \in \mathbb{Z}_{ad}(\T_{\Omega}).
$
We recall that the functional $J$ and the discrete space $\V(\T_{\Y})$ are defined by \eqref{eq:J} and \eqref{eq:FESpace}, respectively. 

We denote by $(\bar{V}, \bar Z) \in \V(\T_\Y) \times \mathbb{Z}_{ad}(\T_{\Omega})$ the optimal state--control pair solving the fully discrete optimal control problem; existence and uniqueness of such a pair being guaranteed by standard arguments. We thus define, in view of \cite{CS:07,NOS},
\begin{equation}
\label{eq:U_discrete}
 \bar{U}:= \tr \bar{V},
\end{equation}
to obtain a discrete approximation $(\bar{U},\bar Z) \in \U(\T_{\Omega}) \times \mathbb{Z}_{ad}(\T_{\Omega})$ of the optimal pair $(\ousf,\ozsf) \in \Hs \times \Zad$ that solves our original optimal control problem \eqref{eq:min}--\eqref{eq:cc}. We recall that $\U(\T_{\Omega}) = \tr \V(\T_\Y)$: a standard $\mathcal{P}_1$ finite element space over the mesh $\T_{\Omega}$.

\begin{remark}[locality] 
\rm 
The main advantage of the fully discrete optimal control problem is its \emph{local nature}: it involves the \emph{local} problem \eqref{eq:state_equation_discrete} as state equation.
\end{remark}

To present optimality conditions we define the optimal adjoint state:
\begin{equation}
\label{eq:adjoint_equation_discrete}
\bar{P} \in \V(\T_\Y): \quad a_\Y( W,\bar{P}) = ( \tr \bar{V} - \usfd , \textrm{tr}_{\Omega} W )_{L^2(\Omega)} \quad \forall W \in \V(\T_{\Y}).
\end{equation}

We provide first order necessary and sufficient optimality conditions for the fully discrete optimal control problem: the pair $(\bar V,\bar Z) \in \V(\T_{\Y}) \times \mathbb{Z}_{ad}(\T_{\Omega})$ is optimal if and only if $\bar V = \bar V(\bar Z)$ solves \eqref{eq:state_equation_discrete} and
\begin{equation}
\label{op_discrete}
  (\tr \bar{P} + \sigma \bar Z + \nu \bar \Lambda, Z - \bar{Z})_{L^2(\Omega)} \geq 0 \quad \forall Z \in \mathbb{Z}_{ad}(\T_{\Omega}),
\end{equation}
where $\bar P = \bar P( \bar Z ) \in \V(\T_{\Y})$ solves \eqref{eq:adjoint_equation_discrete} and $\bar \Lambda \in  \partial \psi(\bar Z)$.

We now explore the properties of the discrete optimal variables. By definition we have $\partial \psi(\bar Z) \subset \mathbb{Z}(\T_\Omega)^*$ and, consequently, $\bar \Lambda \in \psi(\bar Z)$ can be identified with an element of $\mathbb{Z}(\T_\Omega)$ that verifies
\begin{equation}
\label{eq:Lambda_is_P0}
 \bar \Lambda|_{K} = 1, \quad \bar Z|_{K} >0, \qquad  \bar \Lambda|_{K} = -1, \quad \bar Z|_{K} < 0,  \qquad \bar \Lambda|_{K} \in [-1,1], \quad \bar Z|_{K} = 0,
\end{equation}
for every $K \in \T_{\Omega}$. Consequently, by setting $Z = Z_{K} \in \mathbb{P}_0(K)$, that satisfies $\asf \leq Z_K \leq \bsf$, in \eqref{op_discrete} we arrive at
\[
 \sum_{K \in \T_{\Omega}} \left( \int_{K} \tr \bar P \diff x' + |K| \left( \sigma \bar Z|_{K} + \nu \bar \Lambda|_{K} \right) \right) \left( Z_K - \bar Z|_{K} \right) \geq 0.
\]
This discrete variational inequality implies the discrete projection formula
\begin{equation}
 \bar Z|_{K} = \Proj_{[\asf,\bsf]} \left( -\frac{1}{\sigma}\left[ \frac{1}{|K|} \int_{K} \tr \bar P \diff x' + \nu \bar \Lambda|_{K} \right] \right).
 \label{eq:projection_Z}
\end{equation}

On the basis of \eqref{eq:Lambda_is_P0} and \eqref{eq:projection_Z} we have that \cite[Section 4]{CHW:12}
\begin{equation*}
\bar Z|_{K} = 0 \quad \Leftrightarrow \quad  \frac{1}{|K|} \left| \int_{K} \tr \bar P \diff x' \right| \leq \nu \quad \forall K \in \T_{\Omega}
\label{eq:sparsity_Z}
\end{equation*}
and that
\begin{equation}
\bar \Lambda|_{K} = \Proj_{[-1,1]} \left(-\frac{1}{\nu |T|} \int_{K} \tr \bar P \diff x' \right) \quad \forall K \in \T_{\Omega}.
\label{eq:projection_Lambda}
\end{equation}

It will be useful, for the error analysis of the fully discrete optimal control problem, to introduce the $L^2$-orthogonal projection $\Pi_{\T_{\Omega}}$ onto $\mathbb{Z}(\T_{\Omega})$, which is defined as follows \cite{CiarletBook,Guermond-Ern}:
\begin{equation}
\label{eq:orthogonal_projection}
\Pi_{\T_{\Omega}}: L^2(\Omega) \rightarrow \mathbb{Z}(\T_{\Omega}), \qquad 
(\rsf - \Pi_{\T_{\Omega}}\rsf , Z ) = 0 \quad \forall Z \in  \mathbb{Z}(\T_{\Omega}).
\end{equation}
We recall the following properties of $\Pi_{\T_{\Omega}}$. 
\begin{enumerate}[1.]
 \item Stability: For all $\rsf \in L^2(\Omega)$, we have the bound
$
\|\Pi_{\T_{\Omega}}\rsf \|_{L^2(\Omega)} \lesssim
 \|\rsf \|_{L^2(\Omega)}.
$
\item Approximation property: If $\rsf \in H^{1}(\Omega)$, we have the error estimate
\begin{equation}
\label{o_p:aprox}
\| \rsf - \Pi_{\T_{\Omega}}\rsf \|_{L^2(\Omega)} \lesssim h_{\T_{\Omega}}\|\rsf \|_{H^1(\Omega)}
\end{equation}
where $h_{\T_{\Omega}}$ is defined as in Section \ref{sec:approximation_NOS}; see \cite[Lemma 1.131 and Proposition 1.134]{Guermond-Ern}.
\end{enumerate}

If $\rsf \in L^2(\Omega)$, \eqref{eq:orthogonal_projection} immediately yields 
$
 \Pi_{\T_{\Omega}}\rsf |_K = (1/|K|)\int_{K} \rsf \diff x'.
$
Consequently
\begin{equation}
\label{eq:subset}
\Pi_{\T_{\Omega}} \Zad \subset \mathbb{Z}_{ad}(\T_{\Omega}).
\end{equation}

We now introduce two auxiliary adjoint states. The first one is defined as the solution to: Find $Q \in \mathbb{V}(\T_{\Y})$ such that
\begin{equation}
\label{eq:aux1}
a_\Y( W,Q ) = ( \tr \bar{v}- \usfd , \textrm{tr}_{\Omega} W )_{L^2(\Omega)} \quad \forall W \in \mathbb{V}(\T_{\Y}).
\end{equation}
The second one solves: 
\begin{equation}
\label{eq:aux2}
R \in \mathbb{V}(\T_{\Y}): \quad a_\Y( W,R) = ( \tr V(\orsf) - \usfd , \textrm{tr}_{\Omega} W )_{L^2(\Omega)} \quad \forall W \in \mathbb{V}(\T_{\Y}),
\end{equation}
where $V(\orsf)$ corresponds to the solution to problem \eqref{eq:state_equation_discrete} with $Z$ replaced by $\bar \rsf$. 

With these ingredients at hand we now proceed to derive an a priori error analysis for the fully discrete optimal control problem.

\begin{theorem}[fully discrete scheme: error estimates]
\label{thm:error_estimates_1} 
Let $(\bar v, \orsf) \in \HL(y^{\alpha},\C_{\Y}) \times \Zad$ be the optimal pair for the truncated optimal control problem of section \ref{sec:truncated}, and let $(\bar V, \bar Z) \in \V(\T_{\Y}) \times \mathbb{Z}_{ad}(\T_{\Omega})$ be the solution to the fully discrete optimal control problem of section \ref{sec:approximation}. If $\usfd \in \mathbb{H}^{1-\epsilon}(\Omega)$, then
\begin{equation}
\label{eq:rsf-Z}
 \| \bar \rsf - \bar Z \|_{L^2(\Omega)} \lesssim | \log (\# \T_\Y) |^{2s} (\# \T_\Y)^{-\tfrac{1}{n+1}},
\end{equation}
and
\begin{equation}
\label{eq:errorstate}
 \| \tr (\bar v - \bar V)\|_{L^2(\Omega)} \lesssim | \log (\# \T_\Y) |^{2s} (\# \T_\Y)^{-\tfrac{1}{n+1}},
\end{equation}
where the hidden constants in both inequalities are independent of the discretization parameters but depend on the problem data.
\end{theorem}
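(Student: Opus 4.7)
The plan is to follow the classical strategy for a priori error estimates in optimal control: test the continuous variational inequality against the discrete optimum, test the discrete variational inequality against the $L^2$--projection of the continuous optimum, add the two inequalities, and reduce the problem to controllable state/adjoint discretization errors. I would set $\rsf = \bar Z$ in \eqref{op_truncated} (admissible since $\mathbb{Z}_{ad}(\T_\Omega) \subset \Zad$) and $Z = \Pi_{\T_\Omega}\bar \rsf$ in \eqref{op_discrete} (admissible by \eqref{eq:subset}). After adding and rearranging the quadratic $\sigma$--terms, the cross term $\sigma(\bar Z,\Pi_{\T_\Omega}\bar \rsf - \bar \rsf)_{L^2(\Omega)}$ vanishes by $L^2$--orthogonality because $\bar Z \in \mathbb{Z}(\T_\Omega)$. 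For the subgradient pair I would invoke the monotonicity \eqref{eq:subdiff_monotone} of $\partial \psi$ to write $\nu(\bar t,\bar Z - \bar \rsf) + \nu(\bar \Lambda,\Pi_{\T_\Omega}\bar \rsf - \bar Z) \le \nu(\bar \Lambda,\Pi_{\T_\Omega}\bar \rsf - \bar \rsf)$, which also vanishes because $\bar \Lambda \in \mathbb{Z}(\T_\Omega)$ by \eqref{eq:Lambda_is_P0}. Applying orthogonality once more to the remaining consistency term leaves the master inequality
\begin{equation*}
\sigma \|\bar \rsf - \bar Z\|_{L^2(\Omega)}^2 \le (\tr \bar p - \tr \bar P,\bar Z - \bar \rsf)_{L^2(\Omega)} + (\tr \bar P - \Pi_{\T_\Omega}\tr \bar P,\Pi_{\T_\Omega}\bar \rsf - \bar \rsf)_{L^2(\Omega)}.
\end{equation*}

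To control the adjoint difference I would use the auxiliary adjoints \eqref{eq:aux1} and \eqref{eq:aux2} and split $\tr(\bar p - \bar P) = \tr(\bar p - Q) + \tr(Q - R) + \tr(R - \bar P)$. The first summand is the pure finite-element error for the truncated adjoint; Corollary~\ref{pro:regularity_v} ensures $\tr \bar v - \usfd \in \mathbb{H}^{1-s}(\Omega)$, so Theorem~\ref{TH:fl_error_estimates}, combined with a duality (Aubin--Nitsche-type) argument to upgrade the $\Hs$--bound to $L^2(\Omega)$, delivers the rate $|\log(\#\T_\Y)|^{2s}(\#\T_\Y)^{-1/(n+1)}$. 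The second summand is bounded by $\|\tr(\bar v - V(\bar \rsf))\|_{L^2(\Omega)}$ via stability of the discrete adjoint, and Theorem~\ref{TH:fl_error_estimates} supplies the same rate using that $\bar \rsf \in H_0^1(\Omega) \hookrightarrow \Ws$ by Proposition~\ref{pro:regularity_r}. The third summand is dominated by $\|\tr(V(\bar \rsf) - \bar V)\|_{L^2(\Omega)} \lesssim \|\bar \rsf - \bar Z\|_{L^2(\Omega)}$ through stability of the discrete state equation; the resulting $\|\bar \rsf - \bar Z\|_{L^2(\Omega)}^2$ term is absorbed into the left-hand side by Young's inequality. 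The consistency term is handled by Cauchy--Schwarz and \eqref{o_p:aprox}: the factor $\|\Pi_{\T_\Omega}\bar \rsf - \bar \rsf\|_{L^2(\Omega)}$ is $\mathcal{O}(h_{\T_\Omega})$ because $\bar \rsf \in H_0^1(\Omega)$, and $\|\tr \bar P - \Pi_{\T_\Omega}\tr \bar P\|_{L^2(\Omega)}$ is of the same order once compared with $\tr \bar p \in H^1(\Omega)$ (Corollary~\ref{pro:regularity_v}) and the already controlled adjoint FEM error. Since $h_{\T_\Omega} \approx (\#\T_\Y)^{-1/(n+1)}$, this contribution is of higher order; collecting everything yields \eqref{eq:rsf-Z}.

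For \eqref{eq:errorstate} I would split $\tr(\bar v - \bar V) = \tr(\bar v - V(\bar \rsf)) + \tr(V(\bar \rsf) - \bar V)$: Theorem~\ref{TH:fl_error_estimates} with $\bar \rsf \in H_0^1(\Omega) \subset \mathbb{H}^{1-s}(\Omega)$ controls the first summand, while stability of the discrete state bounds the second by $C\|\bar \rsf - \bar Z\|_{L^2(\Omega)}$; invoking \eqref{eq:rsf-Z} closes the argument. The main obstacle I anticipate is extracting the sharp $L^2(\Omega)$ rate $|\log(\#\T_\Y)|^{2s}(\#\T_\Y)^{-1/(n+1)}$ for the adjoint discretization error: Theorem~\ref{TH:fl_error_estimates} is stated only in the energy and $\Hs$ norms, so a careful duality argument adapted to the anisotropic degenerate extension \eqref{eq:state_equation_extended} is needed, and it is precisely this duality that accumulates the two logarithmic factors upgrading $|\log|^s$ to $|\log|^{2s}$.
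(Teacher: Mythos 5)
Your overall architecture coincides with the paper's: the same pair of test functions ($\rsf=\bar Z$ in \eqref{op_truncated}, $Z=\Pi_{\T_\Omega}\orsf$ in \eqref{op_discrete}), the same auxiliary adjoints $Q$ and $R$ from \eqref{eq:aux1}--\eqref{eq:aux2}, and the same three--way splitting of $\tr(\bar p-\bar P)$. Your handling of the nondifferentiable part is a legitimate (and arguably cleaner) variant: the paper converts the subgradient terms into differences of $\psi$ via \eqref{eq:subgradient} and uses $\psi(\Pi_{\T_\Omega}\orsf)\le\psi(\orsf)$, whereas you use the monotonicity \eqref{eq:subdiff_monotone} together with the orthogonality $(\bar\Lambda,\Pi_{\T_\Omega}\orsf-\orsf)_{L^2(\Omega)}=0$; both are correct. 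Your worry about needing a duality argument for the adjoint discretization error is also a non--issue for the stated rate: the $\Hs$ bound \eqref{eq:fl_rate} already embeds into $L^2(\Omega)$ with rate $|\log(\#\T_\Y)|^{s}(\#\T_\Y)^{-1/(n+1)}$, which suffices to reach $|\log(\#\T_\Y)|^{2s}(\#\T_\Y)^{-1/(n+1)}$ (the paper invokes a sharper $L^2$ estimate from the literature, but it is not needed for the claimed rate).

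The genuine gap is your treatment of the third summand $\mathrm{I}_3=(\tr(R-\bar P),\bar Z-\orsf)_{L^2(\Omega)}$. You bound it via stability by $C\|\orsf-\bar Z\|_{L^2(\Omega)}^2$ and propose to absorb this into the left--hand side $\sigma\|\orsf-\bar Z\|_{L^2(\Omega)}^2$ ``by Young's inequality''. There is nothing to trade here: both factors are the same quantity $\|\orsf-\bar Z\|_{L^2(\Omega)}$, so Young's inequality cannot produce a small parameter, and the constant $C$ --- a product of the $L^2\to L^2$ stability constants of the discrete state and adjoint solution operators --- has no reason to be smaller than $\sigma$, which is an arbitrary positive parameter. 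The absorption fails whenever $\sigma$ is small. The correct argument exploits the symmetry of $a_\Y$: testing the discrete equation satisfied by $R-\bar P$ with $W=V(\orsf)-\bar V$ and the one satisfied by $V(\orsf)-\bar V$ with $W=R-\bar P$ yields $(\tr(R-\bar P),\orsf-\bar Z)_{L^2(\Omega)}=\|\tr(V(\orsf)-\bar V)\|_{L^2(\Omega)}^2$, hence
\begin{equation*}
\mathrm{I}_3=-\|\tr(V(\orsf)-\bar V)\|_{L^2(\Omega)}^2\le 0,
\end{equation*}
and the term can simply be dropped. With that replacement the remainder of your outline goes through.
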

\begin{proof}
We proceed in five steps.

\noindent \framebox{\emph{Step 1}.} We observe that since $\mathbb{Z}_{ad}(\T_{\Omega}) \subset \Zad$, we are allowed to set $\rsf = \bar Z$ in the variational inequality \eqref{op_truncated}. This yields the inequality
\[                                                                                                                                                                                  (\tr \bar{p} + \sigma \orsf + \nu \bar t, \bar Z - \orsf )_{L^2(\Omega)} \geq 0.                                                                                                                                                                                    \]
On the other hand, in view of \eqref{eq:subset}, we can set $Z = \Pi_{\T_{\Omega}} \orsf$ in \eqref{op_discrete} and conclude that
\[
 (\tr \bar{P} + \sigma \bar Z + \nu \bar \Lambda, \Pi_{\T_{\Omega}} \orsf- \bar Z)_{L^2(\Omega)} \geq 0.
\]
Since $\bar t \in \partial \psi(\orsf)$ and $\bar \lambda \in \partial \psi(\bar Z)$, \eqref{eq:subgradient} gives that the previous inequalities are equivalent to the following ones:
\begin{align}
\label{eq:basic_with_j_1}
 (\tr \bar{p} + \sigma \orsf , \bar Z - \orsf )_{L^2(\Omega)} + \nu (\psi(\bar Z) - \psi(\orsf)) &\geq 0,
 \\
 \label{eq:basic_with_j_2}
  (\tr \bar{P} + \sigma \bar Z ,  \Pi_{\T_{\Omega}} \orsf- \bar Z )_{L^2(\Omega)} + \nu (\psi( \Pi_{\T_{\Omega}} \orsf ) - \psi(\bar Z))  &\geq 0.
\end{align}
We recall that $\psi(\wsf) = \|\wsf \|_{L^1(\Omega)}$. Invoking the fact that $\Pi_{\T_{\Omega}}$ is defined as in \eqref{eq:orthogonal_projection}, we conclude that $\psi( \Pi_{\T_{\Omega}} \orsf ) \leq \psi(\orsf)$, and thus $ (\psi(\bar Z) - \psi(\orsf)) + (\psi( \Pi_{\T_{\Omega}} \orsf ) - \psi(\bar Z)) \leq 0$. The latter and the addition of the inequalities \eqref{eq:basic_with_j_1} and \eqref{eq:basic_with_j_2} imply that
\[
 (\tr \bar{p} + \sigma \orsf , \bar Z - \orsf )_{L^2(\Omega)} + (\tr \bar{P} + \sigma \bar Z ,  \Pi_{\T_{\Omega}} \orsf- \bar Z )_{L^2(\Omega)} \geq 0,
\]
which yields the basic error estimate
\begin{equation}
 \label{eq:basic_estimate_discrete}
 \begin{aligned}
   \sigma \| \orsf - \bar Z \|^2_{L^2(\Omega)} &\leq (\tr(\bar p - \bar P),  \bar Z - \bar \rsf   )_{L^2(\Omega)} + (\tr \bar P + \sigma \bar Z,  \Pi_{\T_{\Omega}} \orsf- \bar \rsf  )_{L^2(\Omega)} \\
   &= \mathrm{I} + \mathrm{II}.
 \end{aligned}
\end{equation}
 
\noindent \framebox{\emph{Step 2}.} The goal of this step is to control the term $\mathrm{I}$ in \eqref{eq:basic_estimate_discrete}. To do this, we use the auxiliary adjoint states $Q$ and $R$ defined by problems \eqref{eq:aux1} and \eqref{eq:aux2}, respectively, and write
  \begin{equation}
  \label{eq:I1_I2_I3}
  \begin{aligned}
  \mathrm{I} &=  (\tr(\bar p -  Q),  \bar Z - \bar \rsf  )_{L^2(\Omega)}  +  (\tr( Q - R),  \bar Z - \bar \rsf   )_{L^2(\Omega)} \\
  &+  (\tr(R - \bar P),  \bar Z - \bar \rsf  )_{L^2(\Omega)} \\ &=: \mathrm{I}_1 +  \mathrm{I}_2 +  \mathrm{I}_3.
  \end{aligned}
  \end{equation}
  
To bound the term $\textrm{I}_1$ we realize that $Q$, defined as the solution to \eqref{eq:aux1}, is nothing but the Galerkin approximation of the optimal adjoint state $\bar p$. Consequently, an application of the error estimate of \cite[Proposition 28]{NOS3} yields 
\begin{equation}
\label{eq:III_2_1}
\| \tr(\bar p - \bar Q) \|_{L^2(\Omega)} \lesssim |\log N|^{2s} N^{-\frac{1+s}{n+1}} \left( \|\tr \bar v \|_{\Ws} + \| \usfd\|_{\Ws} \right),
\end{equation}
where $N = \# \T_{\Y}$. We note that the $\Ws$--norm of $\tr \bar{v}$ is uniformly controlled in view of Corollary \ref{pro:regularity_v}.

We now bound the term $\textrm{I}_2$. To accomplish this task, we invoke the trace estimate \eqref{Trace_estimate}, a stability estimate for the discrete problem that $Q-R$ solves and the error estimate of  \cite[Proposition 28]{NOS3}. In fact, these arguments allow us to obtain 
\begin{equation}
\label{eq:III_2_2}
  \begin{aligned}
    \| \tr(Q - R) \|_{L^2(\Omega)} & \lesssim \| \nabla ( Q - R)) \|_{L^2(y^{\alpha},\C_{\Y})} \lesssim \| \tr(\bar v - V(\orsf)) \|_{\Hsd}
    \\
    & \lesssim \| \tr (\bar v - V(\orsf)) \|_{L^2(\Omega)}
  \lesssim  |\log N|^{2s} N^{-\frac{1+s}{n+1}}  \| \orsf \|_{\Ws}.
  \end{aligned}
\end{equation}
We remark that, in view of the results of Proposition \ref{pro:regularity_r}, we have that $\orsf \in H_0^1(\Omega) \hookrightarrow \Ws$ for $s \in (0,1)$.

We now estimate the remaining term $\textrm{I}_3$. To do this, we set $W = V(\orsf) - \bar V \in \V(\T_{\Y})$ as a test function in the problem that $R - \bar P$ solves. This yields
\[
 a_{\Y} (V(\orsf) - \bar V,R - \bar P) = ( \tr(V(\orsf) - \bar V), \tr(V(\orsf) - \bar V) )_{L^2(\Omega)}.
\]
Similarly, by setting $W = R - \bar P \in \V(\T_{\Y})$ as a test function in the problem that $V(\orsf) - \bar V$ solves we arrive at
\[
 a_{\Y} (V(\orsf) - \bar V, R - \bar P) =(\orsf - \bar Z, \tr ( R - \bar P))_{L^2(\Omega)}.
\]
Consequently,
\[
 \mathrm{I}_3 = (\tr(R - \bar P),  \bar Z - \bar r  )_{L^2(\Omega)} = - \| \tr( V(\orsf) - \bar V )\|_{L^2(\Omega)}^{2} \leq 0.
\]

\noindent \framebox{\emph{Step 3.}}  In this step we bound the term $\textrm{II}=(\tr \bar P + \sigma \bar Z,  \Pi_{\T_{\Omega}} \orsf- \bar \rsf  )_{L^2(\Omega)}$ in \eqref{eq:basic_estimate_discrete}. We begin by rewriting $\textrm{II}$ as follows:
\begin{multline*}
 \textrm{II} = (\tr \bar p + \sigma \bar \rsf,  \Pi_{\T_{\Omega}} \orsf - \bar \rsf )_{L^2(\Omega)} + (\tr (\bar P \pm R \pm Q -\bar p),  \Pi_{\T_{\Omega}} \orsf - \bar \rsf )_{L^2(\Omega)}
 \\
 +\sigma (\bar Z - \bar \rsf, \Pi_{\T_{\Omega}} \orsf - \bar \rsf )_{L^2(\Omega)} = \textrm{II}_1 + \textrm{II}_2 + \textrm{II}_3.  
\end{multline*}
The control of the first term, $\textrm{II}_1$ follows from the definition \eqref{eq:orthogonal_projection} of $\Pi_{\T_{\Omega}}$, its approximation property \eqref{o_p:aprox} and the regularity results of Propositions \ref{pro:regularity_r} and \ref{pro:regularity_v}:
\begin{align*}
 \textrm{II}_1 &= (\tr \bar p + \sigma \bar \rsf  -\Pi_{\T_{\Omega}}( \tr \bar p + \sigma \bar \rsf  ) ,  \Pi_{\T_{\Omega}} \orsf - \bar \rsf )_{L^2(\Omega)}
 \\
 & \lesssim h^2_{\T_{\Omega}} \| \tr \bar p + \sigma \bar \rsf\|_{H^1(\Omega)} \| \orsf \|_{H^1(\Omega)}.
\end{align*}
The term $\textrm{II}_2$ is bounded by employing the arguments of Step 3: $\tr(\bar P - R)$ is controlled in view of the trace estimate \eqref{Trace_estimate} and the stability of the problems that $\bar{P} - R$ and $V(\orsf) - \bar V$ solve:
\[
 \| \tr( \bar{P} - R) \|_{L^2(\Omega)} \lesssim \| \tr (\bar V-  V(\orsf))\|_{\Hsd} \lesssim \|  \bar Z - \bar \rsf\|_{L^2(\Omega)}.
\]
The terms $\tr ( R - Q)$ and  $\tr ( Q - \bar p)$ are bounded as in \eqref{eq:III_2_2} and \eqref{eq:III_2_1}, respectively. The estimate for $\textrm{II}_3$ is a trivial consequence of the Cauchy--Schwarz inequality. 

\noindent \framebox{\emph{Step 4.}} The desired error bound \eqref{eq:rsf-Z} follows from collecting all estimates that we obtained in previous steps and recalling that $h_{\T_{\Omega}} \approx (\# \T_\Y)^{-1/(n+1)}$.

\noindent \framebox{\emph{Step 5.}} We finally derive estimate \eqref{eq:errorstate}. A basic application of the triangle inequality yields
\[
\| \tr (\bar{v} -  \bar{V})\|_{\Hs} \leq  \| \tr (\bar v - V(\orsf) ) \|_{\Hs}
+ \|\tr(  V(\orsf)  -  \bar V) \|_{\Hs}.
\]
The estimate for the term $ \| \tr( \bar v - V(\orsf)) \|_{\Hs}$ follows by applying the error estimate \eqref{eq:fl_rate}. To control the remaining term $ \|  V(\orsf)  -  \bar V \|_{\Hs}$ we invoke a stability result and estimate \eqref{eq:rsf-Z}. A collection of these estimates yields \eqref{eq:errorstate}. This concludes the proof.
\end{proof} 

As a consequence of the estimates of Theorems \ref{thm:exp_convergence} and \ref{thm:error_estimates_1} we arrive at the completion of the a priori error analysis for the fully discrete optimal control problem.

\begin{theorem}[fractional control problem: error estimates]
Let $(\bar{V},\bar{Z}) $ $\in \V(\T_\Y) \times \mathbb{Z}_{ad}(\T_{\Omega})$ be the optimal pair for the fully discrete optimal control problem of section \ref{sec:approximation} and let $\bar{U} \in \U(\T_{\Omega})$ be defined as in \eqref{eq:U_discrete}. If $\usfd \in \mathbb{H}^{1-s}(\Omega)$, then
\begin{equation}
\label{eq:final_estimate_control}
  \| \ozsf - \bar{Z} \|_{L^2(\Omega)} \lesssim  |\log (\# \T_{\Y})|^{2s}(\# \T_{\Y})^{-\tfrac{1}{n+1}},
\end{equation}
and
\begin{equation}
\label{eq:final_estimate_state}
\| \ousf - \bar{U} \|_{\Hs} \lesssim  |\log (\# \T_{\Y})|^{2s}(\# \T_{\Y})^{-\tfrac{1}{n+1}},
\end{equation}
where the hidden constants in both inequalities are independent of the discretization parameters but depend on the problem data.
\end{theorem}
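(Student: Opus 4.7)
The plan is a two-step triangle inequality that combines the two error sources already isolated in the paper, namely the exponential-in-$\Y$ truncation estimate of Theorem \ref{thm:exp_convergence} and the discretization estimate of Theorem \ref{thm:error_estimates_1}, tuned by an appropriate choice of the truncation parameter.

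For the control estimate \eqref{eq:final_estimate_control}, I would introduce the optimal control $\orsf$ of the truncated problem as an intermediate quantity and write
\[
\|\ozsf - \bar{Z}\|_{L^2(\Omega)} \leq \|\ozsf - \orsf\|_{L^2(\Omega)} + \|\orsf - \bar{Z}\|_{L^2(\Omega)}.
\]
Theorem \ref{thm:exp_convergence} controls the first summand by $e^{-\sqrt{\lambda_1}\Y/4}\bigl(\|\orsf\|_{L^2(\Omega)} + \|\usfd\|_{L^2(\Omega)}\bigr)$, and $\|\orsf\|_{L^2(\Omega)}$ is uniformly bounded because $\orsf\in\Zad$ takes values in $[\asf,\bsf]$. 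Theorem \ref{thm:error_estimates_1} controls the second summand by $|\log(\#\T_\Y)|^{2s}(\#\T_\Y)^{-1/(n+1)}$. The key balancing step is then to choose the truncation parameter $\Y \approx |\log(\#\T_\Y)|$ with a large enough constant: taking $\Y = \frac{4}{(n+1)\sqrt{\lambda_1}}|\log(\#\T_\Y)|$ gives $e^{-\sqrt{\lambda_1}\Y/4} = (\#\T_\Y)^{-1/(n+1)}$, so both summands exhibit the same algebraic decay up to the logarithmic factor. This scaling is consistent with the hypothesis already required in Theorems \ref{TH:fl_error_estimates} and \ref{thm:error_estimates_1}.

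For the state estimate \eqref{eq:final_estimate_state}, I would use $\ousf = \tr\oue$ and $\bar{U} = \tr\bar{V}$ and split through the truncated optimal state $\bar{v}$:
\[
\|\ousf - \bar{U}\|_{\Hs} \leq \|\tr(\oue - \bar{v})\|_{\Hs} + \|\tr(\bar{v} - \bar{V})\|_{\Hs}.
\]
The second summand is treated exactly as in Step 5 of the proof of Theorem \ref{thm:error_estimates_1}, via the triangle inequality through the auxiliary $V(\orsf)$, where one term is bounded by the Galerkin estimate \eqref{eq:fl_rate} and the other by stability plus the already-established bound \eqref{eq:rsf-Z}. The first summand needs an $\Hs$ analogue of Theorem \ref{thm:exp_convergence}: by the trace inequality \eqref{Trace_estimate} one has $\|\tr(\oue - \bar{v})\|_{\Hs} \lesssim \|\oue - \bar{v}\|_{\HLn(y^\alpha,\C)}$, and this energy norm can be controlled by rerunning the adjoint-splitting argument of Theorem \ref{thm:exp_convergence} in the energy norm rather than in $L^2(\Omega)$, invoking the tail estimate of Proposition \ref{pro:exponential_decay} for both the state $\oue$ and the adjoint $\bar{\pe}$. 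With the same choice $\Y \approx |\log(\#\T_\Y)|$, the exponential piece is again dominated by $(\#\T_\Y)^{-1/(n+1)}$ and \eqref{eq:final_estimate_state} follows.

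I expect the main obstacle to be precisely this last upgrade from an $L^2(\Omega)$ trace bound to an $\Hs(\Omega)$ trace bound in the truncation step, since Theorem \ref{thm:exp_convergence} is stated in $L^2$. However, all the required ingredients are already in place: the trace inequality \eqref{Trace_estimate}, the exponential gradient tail bound of Proposition \ref{pro:exponential_decay}, the monotonicity of the subdifferential \eqref{eq:subdiff_monotone}, and the coercivity of $a$. Beyond that point, the argument reduces to collecting the two estimates and observing that the choice of $\Y$ forces the exponential terms to decay at least as fast as the algebraic ones, yielding both \eqref{eq:final_estimate_control} and \eqref{eq:final_estimate_state} with hidden constants depending only on $\sigma$, $\lambda_1$, $\asf$, $\bsf$, and $\|\usfd\|_{L^2(\Omega)}$.
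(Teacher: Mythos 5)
Your proposal is correct and follows essentially the same route as the paper: a triangle inequality through the truncated optimal pair $(\bar v,\orsf)$, combining the exponential truncation bounds of Theorem \ref{thm:exp_convergence} with the discretization bounds of Theorem \ref{thm:error_estimates_1}, and then choosing $\Y\approx|\log(\#\T_\Y)|$ so that the exponential term is dominated by the algebraic one. The one place you go beyond the paper is the state estimate: you correctly observe that \eqref{eq:state_exp} is only an $L^2(\Omega)$ bound and propose upgrading the truncation error to $\Hs$ via the trace estimate \eqref{Trace_estimate} and an energy-norm rerun of the adjoint-splitting argument, whereas the paper simply invokes \eqref{eq:state_exp} to control the $\Hs$-norm of $\ousf-\tr\bar v$ without comment, so your version is actually the more careful one.
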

\begin{proof}
To obtain the error estimate \eqref{eq:final_estimate_control} we invoke the estimates \eqref{eq:control_exp} and \eqref{eq:rsf-Z}. In fact, we have that
\begin{align*}
\| \ozsf - \bar{Z} \|_{L^2(\Omega)} & \leq 
 \| \ozsf - \orsf  \|_{L^2(\Omega)} +  \| \orsf - \bar{Z}\|_{L^2(\Omega)} 
\\
& \lesssim  e^{-\sqrt{\lambda_1} \Y/4} 
+  |\log(\# \T_{\Y})|^{2s}(\# \T_{\Y})^{-\tfrac{1}{n+1}}.
\end{align*}
The election of the truncation parameter $\Y \approx | \log(\# (\T_\Y)) |$ allows us to conclude; see \cite[Remark 5.5]{NOS} for details. Finally, to derive \eqref{eq:final_estimate_state}, we use that $\bar \usf = \tr \bar \ue$,  $\bar U = \tr \bar V$ and apply the estimates \eqref{eq:state_exp} and \eqref{eq:errorstate} as follows:
\begin{align*}
\| \ousf - \bar{U} \|_{\Hs} 
 & \leq 
\| \ousf - \tr \bar{v}  \|_{\Hs} +  \| \tr \bar{v} - \bar{U} \|_{\Hs} 
\\
&\lesssim e^{-\sqrt{\lambda_1} \Y/4} 
+  |\log(\# \T_{\Y})|^{2s}(\# \T_{\Y})^{-\tfrac{1}{n+1}}.
\end{align*}  
The fact that $\Y \approx | \log(\# (\T_\Y)) |$ yields \eqref{eq:final_estimate_state} and concludes the proof.
\end{proof}

\bibliographystyle{siamplain}
\bibliography{biblio}

\def\cprime{$'$} \def\cprime{$'$} \def\cprime{$'$} \def\cprime{$'$}
  \def\cprime{$'$}
\begin{thebibliography}{10}

\bibitem{MR3429730}
{\sc H.~Antil and E.~Ot{\'a}rola}, {\em A {FEM} for an optimal control problem
  of fractional powers of elliptic operators}, SIAM J. Control Optim., 53
  (2015), pp.~3432--3456, \url{http://dx.doi.org/10.1137/140975061}.

\bibitem{MR3504977}
{\sc H.~Antil, E.~Ot\'arola, and A.~J. Salgado}, {\em A space-time fractional
  optimal control problem: analysis and discretization}, SIAM J. Control
  Optim., 54 (2016), pp.~1295--1328,
  \url{http://dx.doi.org/10.1137/15M1014991}.

\bibitem{atanackovic2014fractional}
{\sc T.~Atanackovic, S.~Pilipovic, B.~Stankovic, and D.~Zorica}, {\em
  Fractional Calculus with Applications in Mechanics: Vibrations and Diffusion
  Processes}, 2014.

\bibitem{bio}
{\sc A.~Bueno-Orovio, D.~Kay, V.~Grau, B.~Rodriguez, and K.~Burrage}, {\em
  Fractional diffusion models of cardiac electrical propagation: role of
  structural heterogeneity in dispersion of repolarization}, J. R. Soc.
  Interface, 11 (2014), \url{http://dx.doi.org/10.1098/rsif.2014.0352}.

\bibitem{CS:07}
{\sc L.~Caffarelli and L.~Silvestre}, {\em An extension problem related to the
  fractional {L}aplacian}, Comm. Part. Diff. Eqs., 32 (2007), pp.~1245--1260,
  \url{http://dx.doi.org/10.1080/03605300600987306}.

\bibitem{CDDS:11}
{\sc A.~Capella, J.~D{\'a}vila, L.~Dupaigne, and Y.~Sire}, {\em Regularity of
  radial extremal solutions for some non-local semilinear equations}, Comm.
  Partial Differential Equations, 36 (2011), pp.~1353--1384,
  \url{http://dx.doi.org/10.1080/03605302.2011.562954}.

\bibitem{CHW:12again}
{\sc E.~Casas, R.~Herzog, and G.~Wachsmuth}, {\em Approximation of sparse
  controls in semilinear equations by piecewise linear functions}, Numer.
  Math., 122 (2012), pp.~645--669,
  \url{http://dx.doi.org/10.1007/s00211-012-0475-7}.

\bibitem{CHW:12}
{\sc E.~Casas, R.~Herzog, and G.~Wachsmuth}, {\em Optimality conditions and
  error analysis of semilinear elliptic control problems with {$L^1$} cost
  functional}, SIAM J. Optim., 22 (2012), pp.~795--820,
  \url{http://dx.doi.org/10.1137/110834366}.

\bibitem{wow}
{\sc W.~Chen}, {\em A speculative study of $2/3$-order fractional laplacian
  modeling of turbulence: Some thoughts and conjectures}, Chaos, 16 (2006),
  023126, pp.~1--11,
  \url{http://dx.doi.org/http://dx.doi.org/10.1063/1.2208452}.

\bibitem{CiarletBook}
{\sc P.~Ciarlet}, {\em The finite element method for elliptic problems}, SIAM,
  Philadelphia, PA, 2002, \url{http://dx.doi.org/10.1137/1.9780898719208}.

\bibitem{MR1058436}
{\sc F.~H. Clarke}, {\em Optimization and nonsmooth analysis}, vol.~5 of
  Classics in Applied Mathematics, Society for Industrial and Applied
  Mathematics (SIAM), Philadelphia, PA, second~ed., 1990,
  \url{http://dx.doi.org/10.1137/1.9781611971309}.

\bibitem{DL:05}
{\sc R.~Dur{\'a}n and A.~Lombardi}, {\em Error estimates on anisotropic {$Q_1$}
  elements for functions in weighted {S}obolev spaces}, Math. Comp., 74 (2005),
  pp.~1679--1706 (electronic),
  \url{http://dx.doi.org/10.1090/S0025-5718-05-01732-1}.

\bibitem{Guermond-Ern}
{\sc A.~Ern and J.-L. Guermond}, {\em Theory and practice of finite elements},
  vol.~159 of Applied Mathematical Sciences, Springer-Verlag, New York, 2004.

\bibitem{ShinChan}
{\sc D.~Fujiwara}, {\em Concrete characterization of the domains of fractional
  powers of some elliptic differential operators of the second order}, Proc.
  Japan Acad., 43 (1967), pp.~82--86.

\bibitem{GH:14}
{\sc P.~Gatto and J.~Hesthaven}, {\em Numerical approximation of the fractional
  {L}aplacian via hp-finite elements, with an application to image denoising},
  J. Sci. Comp., 65 (2015), pp.~249--270,
  \url{http://dx.doi.org/10.1007/s10915-014-9959-1}.

\bibitem{GU}
{\sc V.~Gol{\cprime}dshtein and A.~Ukhlov}, {\em Weighted {S}obolev spaces and
  embedding theorems}, Trans. Amer. Math. Soc., 361 (2009), pp.~3829--3850,
  \url{http://dx.doi.org/10.1090/S0002-9947-09-04615-7}.

\bibitem{ICH}
{\sc R.~Ishizuka, S.-H. Chong, and F.~Hirata}, {\em An integral equation theory
  for inhomogeneous molecular fluids: The reference interaction site model
  approach}, J. Chem. Phys, 128 (2008), 034504,
  \url{http://dx.doi.org/http://dx.doi.org/10.1063/1.2819487}.

\bibitem{KSbook}
{\sc D.~Kinderlehrer and G.~Stampacchia}, {\em An introduction to variational
  inequalities and their applications}, vol.~88 of Pure and Applied
  Mathematics, Academic Press, Inc. [Harcourt Brace Jovanovich, Publishers],
  New York-London, 1980.

\bibitem{MR2064019}
{\sc S.~Levendorski{\u\i}}, {\em Pricing of the {A}merican put under {L}\'evy
  processes}, Int. J. Theor. Appl. Finance, 7 (2004), pp.~303--335,
  \url{http://dx.doi.org/10.1142/S0219024904002463}.

\bibitem{Lions}
{\sc J.-L. Lions and E.~Magenes}, {\em Non-homogeneous boundary value problems
  and applications. {V}ol. {I}}, Springer-Verlag, New York, 1972.

\bibitem{Muckenhoupt}
{\sc B.~Muckenhoupt}, {\em Weighted norm inequalities for the {H}ardy maximal
  function}, Trans. Amer. Math. Soc., 165 (1972), pp.~207--226.

\bibitem{NOS}
{\sc R.~H. Nochetto, E.~Ot\'arola, and A.~J. Salgado}, {\em A {PDE} approach to
  fractional diffusion in general domains: A priori error analysis}, Found.
  Comput. Math., 15 (2015), pp.~733--791,
  \url{http://dx.doi.org/10.1007/s10208-014-9208-x}.

\bibitem{NOS3}
{\sc R.~H. Nochetto, E.~Ot{\'a}rola, and A.~J. Salgado}, {\em A {PDE} approach
  to space-time fractional parabolic problems}, SIAM J. Numer. Anal., 54
  (2016), pp.~848--873, \url{http://dx.doi.org/10.1137/14096308X}.

\bibitem{Otarola_controlp1}
{\sc E.~Ot\'arola}, {\em A piecewise linear {FEM} for an optimal control
  problem of fractional operators: error analysis on curved domains}, ESAIM
  Math. Model. Numer. Anal.,  (2016),
  \url{http://dx.doi.org/10.1051/m2an/2016065}.
\newblock (to appear).

\bibitem{MR2330778}
{\sc W.~Schirotzek}, {\em Nonsmooth analysis}, Universitext, Springer, Berlin,
  2007, \url{http://dx.doi.org/10.1007/978-3-540-71333-3}.

\bibitem{MR2556849}
{\sc G.~Stadler}, {\em Elliptic optimal control problems with {$L^1$}-control
  cost and applications for the placement of control devices}, Comput. Optim.
  Appl., 44 (2009), pp.~159--181,
  \url{http://dx.doi.org/10.1007/s10589-007-9150-9}.

\bibitem{ST:10}
{\sc P.~R. Stinga and J.~L. Torrea}, {\em Extension problem and {H}arnack's
  inequality for some fractional operators}, Comm. Part. Diff. Eqs., 35 (2010),
  pp.~2092--2122, \url{http://dx.doi.org/10.1080/03605301003735680}.

\bibitem{Tartar}
{\sc L.~Tartar}, {\em An introduction to {S}obolev spaces and interpolation
  spaces}, vol.~3 of Lecture Notes of the Unione Matematica Italiana, Springer,
  Berlin, 2007.

\bibitem{Tbook}
{\sc F.~Tr{\"o}ltzsch}, {\em Optimal control of partial differential
  equations}, vol.~112 of Graduate Studies in Mathematics, American
  Mathematical Society, Providence, RI, 2010,
  \url{http://dx.doi.org/10.1090/gsm/112}.
\newblock Theory, methods and applications, Translated from the 2005 German
  original by J{\"u}rgen Sprekels.

\bibitem{Turesson}
{\sc B.~O. Turesson}, {\em Nonlinear potential theory and weighted {S}obolev
  spaces}, Springer, 2000, \url{http://dx.doi.org/10.1007/BFb0103908}.

\bibitem{MR2283487}
{\sc G.~Vossen and H.~Maurer}, {\em On {$L^1$}-minimization in optimal control
  and applications to robotics}, Optimal Control Appl. Methods, 27 (2006),
  pp.~301--321, \url{http://dx.doi.org/10.1002/oca.781},
  \url{http://dx.doi.org/10.1002/oca.781}.

\bibitem{MR2826983}
{\sc G.~Wachsmuth and D.~Wachsmuth}, {\em Convergence and regularization
  results for optimal control problems with sparsity functional}, ESAIM Control
  Optim. Calc. Var., 17 (2011), pp.~858--886,
  \url{http://dx.doi.org/10.1051/cocv/2010027}.

\end{thebibliography}
\end{document}